\newtheorem{theorem}{Theorem}[section]
\newtheorem{corollary}[theorem]{Corollary}
\newtheorem{lemma}[theorem]{Lemma}
\newtheorem{proposition}[theorem]{Proposition}
\newtheorem{problem}[theorem]{Problem}
\theoremstyle{definition}
\newtheorem{remark}[theorem]{Remark}
\newtheorem{example}[theorem]{Example}
\numberwithin{equation}{section}
\begin{document}

%%%%% To ease editing, for IMPAN journals add:

\baselineskip=17pt

%%%%%%%%%%%%%%%%

\title[A study on state spaces in classical Banach spaces]{A study on state spaces in classical Banach spaces}

\author[S. Daptari]{Soumitra Daptari}
\address{Department of Mathematics\\Shiv Nadar Institution of Eminence\\ Gautam Buddha Nagar Delhi NCR, Uttar Pradesh-201314, India}
\email{daptarisoumitra@gmail.com}

\author[S. Dwivedi]{Saurabh Dwivedi}
\address{Department of Mathematics\\Shiv Nadar Institution of Eminence\\ Gautam Buddha Nagar Delhi NCR, Uttar Pradesh-201314, India}
\email{sd605@snu.edu.in}

\date{}

\begin{abstract}
Let $X$ be a real or complex Banach space. Let $S(X)$ denote the unit sphere of $X$. For $x\in S(X)$, let $S_{x}=\{x^*\in S(X^*):x^*(x)=1\}$. A lot of Banach space geometry can be determined by the `quantum' of the state space $S_{x}$. In this paper, we mainly study the norm compactness and weak compactness of the state space in the space of Bochner integrable function and $c_{0}$-direct sums of Banach spaces. Suppose $X$ is such that $X^*$ is  separable and let $\mu$ be the Lebesgue measure on $[0,1]$. For $f\in L^1(\mu,X)$, we demonstrate that if $S_{f}$ is norm compact, then $f$ is a smooth point. When $\mu$ is the discrete measure, we show that if $ (x_i) \in S(\ell^{1}(X))$ and $ \|x_{i}\|\neq 0$ for all $i\in{\mathbb{N}}$, then $ S_{(x_i)}$ is weakly compact in $ \ell^\infty(X^*) $ if and only if $ S_{\frac{x_i}{\|x_i\|}} $ is weakly compact in $X^*$ for each $i\in{\mathbb{N}}$ and $\textup{diam}\left(S_{\frac{x_i}{\|x_i\|}}\right) \to 0 $. For discrete $c_{0}$-sums, we show that for $(x_{i})\in c_{0}(X)$, $S_{(x_{i})}$ is weakly  compact if and only if for each $i_{0}\in \mathbb{N}$ such that $\|x_{i_{0}}\|=1$, the state space $S_{x_{i_{0}}}$ is weakly compact.
\end{abstract}
\subjclass[2020]{Primary 46A22, 46B10, 46B25; Secondary 46B20, 46B22}

\keywords{Smooth points, State spaces, Gateaux differentiability, Weakly compact, $M$-ideal, Rotundity}

\maketitle
\textbf{To appear in Colloquium Mathematicum.}
\section{Introduction}
Let $X$ be a real or complex Banach space, and let $X^*$ be the dual space of $X$. We denote by $X_1$, the unit ball of $X$. Let $S(X)$ be the unit sphere of $X$. For $x\in S(X)$, let $S_x=\{x^*\in S(X^*):x^*(x)=1\}$ be the state space. By Hahn-Banach theorem, $S_x$ is always non-empty. A subset $F$ of $X_1^*$ is said to be a face of $X_1^*$ if for every $a,b\in X_1^*$ such that for the line segment $[a,b]$, if $[a,b]\cap F\neq \phi$, then $[a,b]\subseteq F$ (here $[a,b]=\{ta+(1-t)b:t\in [0,1]\}$). It is easy to see that the state space $S_{x}$ is a face of $X_{1}^*$, thus $S_{x}\cap\text{ext}(X_{1}^*)=\text{ext}(S_{x})$ is non-empty by the Krein--Milman theorem, as $S_x$ is weak$^*$ compact. An element $x\in S(X)$ is said to be a smooth point in $X$ if $S_x$ is a singleton. The following characterizations exemplify the importance of studying smoothness in a Banach space. A point $x\in S(X)$ is smooth if and only if the norm on $X$ is G\^ateaux differentiable at $x$ (see \cite[Theorem~5.4.17]{M}). A Banach space $X$ is said to be a smooth Banach space if every $x\in S(X)$ is a smooth point in $X$. We say $X$ is rotund if $S(X)$ does not contain any line segment. The rotundity of a Banach space is a well-known geometric feature, and is also closely related to the concept of smoothness. If $X^*$ is rotund, then $X$ is smooth. A detailed study of this relationship can be seen in Chapter~5 of \cite{M}.

We will first concentrate on the scenario in which the state space has finitely many linearly independent elements. For $k\in \mathbb{N}$, $x\in S(X)$ is said to be a $k$-smooth point in $X$ if $\text{dim}(\text{span}\{S_x\})=k$ (see \cite{LR}). Clearly, the $1$-smoothness of a point coincides with the smoothness of that point. Let $k\in \mathbb{N}\setminus \{1\}$, a point $x\in S(X)$ is said to be a multismooth point of order $k$ if it is a $k$-smooth point. These notions have been extensively studied in \cite{DK, KS, LR}. 

More generally we study the situation when the state space is norm compact or weakly compact. For $x\in S(X)$, since $S_x$ is weak$^*$ compact, we will discuss the compactness in other two natural topologies. One of the motivations for studying this notion is the recent article \cite{R1}. In \cite{R1}, it is shown that in a $C^*$-algebra $A$ with unity, if the state space of unity is weakly compact, then $A$ is a finite-dimensional space.

We recall from \cite{FP} that the norm of $X$ is said to be strongly subdifferentiable (SSD) at $x\in S(X)$ if $\lim _{t\to 0^+}\frac{\|x+ty\|-1}{t}$ exists uniformly for $y\in X_1$. In a finite-dimensional space, it is easy to see that the norm is SSD at every point in $S(X)$. Another motivation to study this notion is \cite[Proposition~2.7]{FP}, where it is shown that if the norm on $X$ is SSD and all state spaces are norm compact, then strong subdifferentiability passes through the quotients ${X}/{Y}$ for a proximinal subspace $Y\subseteq X$. A Banach space $X$ is said to be Asplund if $X^*$ has the Radon-Nikod\'ym property (in short RNP). For a fundamental understanding of the RNP, we refer the reader to \cite{DU}. It is also shown in \cite{GGS} that if for all $x\in S({X})$, $S_{x}$ is weakly compact and the duality map $x\to S_x$ is upper semi-continuous, then $X$ is an Asplund space. We refer the reader to \cite{GGS} for any undefined notation. These justify an independent study of weak compactness or norm compactness of $S_{x}$.

We denote the canonical embedding of $X$ in to its bidual $X^{**}$ by identifying $X$ with its image under the map $J_{X}:X\to X^{**}$, where $J_{X}$ is the canonical embedding. For clarity, we sometimes write $J_{X}(X)$ to explicitly refer to the embedded copy of $X$ in $X^{**}$.
A Banach space $X$ is said to be Hahn-Banach smooth if every $x^*\in X^*$ has a unique norm-preserving extension from $X$ to $X^{**}$. In this case, $J_{X^*}(x^*)$ is the only norm-preserving extension of $x^*$ from $X$ to $X^{**}$, see \cite{SF} for details.

Let us introduce a few more notations that we will use throughout this article. Let $\{X_i\}_{i\in J}$ be a family of Banach spaces. We denote by $\bigoplus_{{c_0}{i\in J}}(X_{i})$, the space $\{(x_i):x_{i}\in X_{i} \text{ and } \|x_i\|\to 0 \}$, endowed with the norm $\|(x_i)\|=\sup_{i\in {J}}\|x_i\|$  for $(x_i)\in \bigoplus_{{c_0}i\in J}(X_{i})$. Let $\bigoplus_{{\ell^{1}}{i\in J}} (X_i)=\{(x_i): x_i\in X_i, \sum_{i\in J}\|x_i\|<\infty\}$ and let $\bigoplus_{{\ell^{\infty}{i\in J}}} (X_i)=\{(x_i): x_i\in X_i,\text{ sup}_{i\in J}\|x_{i}\|<\infty\}$, equipped with $\ell^{1} \text{ and } \ell^{\infty}$ norms, respectively. If $X_i=X$ for all $i\in J$, we use the notations $c_{0}(X)$, $\ell^1(X)$ and $\ell^{\infty}(X)$, respectively, for the above spaces. Let $(\Omega, \Sigma, \mu)$ be a measure space. We use $L^1(\Omega,X)$ to denote the space of all (equivalent classes of) $X$-valued Bochner integrable functions, equipped with the norm $\|f\|=\int_{\Omega}\|f(\omega)\|d(\mu(\omega))$ for $f\in L^1(\Omega,X)$. We refer the reader to \cite{DU} for more information on this topic. Let $\mu$ be a finite measure and suppose that $X^*$ has the RNP with respect to $\mu$. Then, by \cite[Theorem~IV.1.1]{DU}, $L^1(\Omega,X)^*=L^{\infty}(\Omega,X^*)$, the space of all (equivalence classes of) $X^*$-valued strongly measurable functions that are essentially bounded. It is known that if $X^*$ is separable, then $X^*$ has the RNP (see \cite[Theorem~III.3.1]{DU}). Hence we can apply the above identification when $X^*$ is separable. Whenever we use $L^1(\Omega,X)$, we consider $\mu$ to be non-atomic. For the atomic case, we discuss $\ell^1(X)$. Let $\Omega$ be a compact Hausdorff space. We use $C(\Omega,X)$ to denote the space of $X$-valued continuous functions, equipped with the supremum norm. 

In Section~2, we discuss smoothness and multismoothness in $\ell^1(X)$. We discuss the stability of these two notions in $\ell^1(X)$. In \cite[Theorem~1.1]{DK}, the authors provided a characterization of smooth points in $L^1([0,1],X)$ using Kuratowski and Ryll-Nardzewski's selection theorem. In Theorem~\ref{3t1}, we provide an alternative proof using Von Neumann's selection theorem. In Theorem~\ref{Th316}, we show that if $X^*$ is separable and $f\in L^{1}([0,1],X)$ with $\|f\|=1$, then $S_f$ is norm compact if and only if $f$ is a smooth point. Let $X$ be a Banach space such that $X=M\bigoplus_{\ell^{\infty}}N$ for some closed subspaces $M$ and $N$ of $X$. In Section~3, we start with the characterization of points whose state spaces are weakly compact in $X$. Let $Y$ be a closed subspace of $X$. In Theorem~\ref{2T2}, we see that under some geometric conditions on $Y$, the weakly compact state spaces of $Y$ continue to be weakly compact state spaces of $X$. We discuss the complete characterization of state spaces in $L^1(\mu)$, $C(\Omega,X)$, $\bigoplus_{{c_{0}}{i\in J}}(X_{i})$ and $\bigoplus_{{\ell^1}{i\in J}}(X_{i})$ with respect to the norm compactness and weak compactness.

\section{Smoothness and Multismoothness in spaces of vector-valued functions.}
In this section, we first discuss the smooth points of $L^{1}([0,1],X)$. Let $([0,1],\mathcal{A},\mu)$ be the Lebesgue measure space, where $\mathcal{A}$ is the Lebesgue $\sigma$-field and $\mu$ is the Lebesgue measure. Let $V$ be a Polish space (separable completely metrizable topological space). We denote by $\mathcal{B}_V$, the Borel $\sigma$-algebra and by $\mathcal{A}\otimes \mathcal{B}_V$, the product $\sigma$-algebra on $[0,1]\times V$ (see \cite[p. 83, 87]{S} for details). We recall two selection theorems from \cite{S} in our setup.

\begin{theorem}[Kuratowski and Ryll-Nardzewski]\cite[Theorem~5.2.1]{S}\label{KRN}
    Every $\mu$-measurable, closed valued (non-void) multifunction $F:[0,1]\longrightarrow 2^V$ admits a $\mu$-measurable selection such that $h(t)\in F(t)$ for all $t\in [0,1]$. 
\end{theorem}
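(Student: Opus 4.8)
The plan is to prove the selection theorem by a successive-approximation scheme: I would construct a sequence of measurable functions $h_n:[0,1]\to V$ taking values in a fixed countable dense set and approximating $F$ ever more closely, and then take $h$ to be their pointwise limit. Fix a complete metric $d$ compatible with the Polish topology on $V$ and a countable dense set $\{v_1,v_2,\dots\}\subseteq V$. The definition of $F$ being $\mu$-measurable unpacks to the \emph{weak measurability} statement that for every open $U\subseteq V$ the set $\{t:F(t)\cap U\neq\emptyset\}$ is $\mu$-measurable; in particular each set $\{t:d(v_k,F(t))<r\}$ is measurable, since it equals the preimage $\{t:F(t)\cap B(v_k,r)\neq\emptyset\}$ of the open ball $B(v_k,r)$. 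This observation is the only place where measurability of $F$ enters, and I would record it first.

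The core is the inductive step. I would arrange that each $h_n$ takes values in $\{v_k\}$, is measurable, and satisfies $d(h_n(t),F(t))<2^{-n}$ for all $t$. For the base case, $h_0(t)=v_{k(t)}$ with $k(t)=\min\{k:d(v_k,F(t))<1\}$ works, the minimum existing by density. Given $h_{n-1}$ with $d(h_{n-1}(t),F(t))<2^{-n+1}$, for each $t$ one picks $y\in F(t)$ with $d(h_{n-1}(t),y)<2^{-n+1}$ and then a dense point $v_k$ with $d(v_k,y)<2^{-n}$, so that $d(v_k,F(t))<2^{-n}$ and $d(v_k,h_{n-1}(t))<2^{-n+1}+2^{-n}$; hence the index set $I(t)=\{k:d(v_k,F(t))<2^{-n}\text{ and }d(v_k,h_{n-1}(t))<2^{-n+1}+2^{-n}\}$ is nonempty. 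I would then define $h_n(t)=v_{k(t)}$, where $k(t)=\min I(t)$ is the least admissible index.

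The main obstacle, and the only genuinely delicate point, is verifying that $h_n$ so defined is measurable. Here I would show that each event $\{t:k(t)=k\}$ is measurable by writing it as $\{t:k\in I(t)\}\setminus\bigcup_{j<k}\{t:j\in I(t)\}$ and checking that each $\{t:j\in I(t)\}$ is measurable. The condition $d(v_j,F(t))<2^{-n}$ is measurable by the weak measurability of $F$ recorded above, while $d(v_j,h_{n-1}(t))<2^{-n+1}+2^{-n}$ is measurable because $h_{n-1}$ is a measurable function taking countably many values, so the set on which it lands in the open set $\{v:d(v_j,v)<2^{-n+1}+2^{-n}\}$ is measurable. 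Since $h_n$ takes at most countably many values and each of its level sets is measurable, $h_n$ is measurable.

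Finally I would pass to the limit. The construction yields $d(h_{n-1}(t),h_n(t))<2^{-n+1}+2^{-n}$, a summable increment, so $(h_n(t))_n$ is uniformly Cauchy and, by completeness of $d$, converges to some $h(t)$; the limit $h$ is measurable as a pointwise limit of measurable $V$-valued functions. Since $d(h_n(t),F(t))<2^{-n}\to 0$ and $F(t)$ is closed, the limit satisfies $h(t)\in F(t)$ for every $t$, which produces the desired $\mu$-measurable selection and completes the argument.
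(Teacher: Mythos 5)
The paper does not actually prove this statement: it is a classical result recalled verbatim, with a citation to \cite[Theorem~5.2.1]{S}, and used as a black box. Your successive-approximation argument is correct and is essentially the canonical proof found in that cited source: weak measurability of $F$ gives measurability of the sets $\{t : d(v_k,F(t))<r\}$, the countably-valued approximants $h_n$ are measurable through their level sets, and the summable increments together with completeness of $d$ and closedness of each $F(t)$ yield a measurable selection in the limit. There is nothing to fault in the argument, so it stands as a valid reconstruction of the proof the paper delegates to the reference.
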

\begin{theorem}[Von Neumann]\cite[Corollary~5.5.8]{S}\label{VNT}
   If $B\in \mathcal{A}\otimes \mathcal{B}_V$, then there exists a $\mu$-measurable selection $h:[0,1]\to V$ such that $(t,h(t))\in B$ for all $t\in [0,1]$.
\end{theorem}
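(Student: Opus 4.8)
The plan is to treat this as the classical von Neumann measurable uniformization theorem and to prove it through the theory of the Suslin operation. Since the conclusion asks for a selection defined for \emph{all} $t\in[0,1]$, I would first record the implicit hypothesis that every vertical section $B_t=\{v\in V:(t,v)\in B\}$ is non-void, equivalently $\pi_{[0,1]}(B)=[0,1]$; this holds automatically in the intended application to smooth points, where $B$ is cut out by a non-degenerate norming condition. As a first reduction, because $V$ is Polish it is Borel isomorphic to a Borel subset of the Cantor space $2^{\mathbb N}$, and transporting $B$ along this isomorphism preserves both membership in $\mathcal{A}\otimes\mathcal{B}_V$ and the measurability of a prospective selection; so I may assume $V=2^{\mathbb N}$, equipped with its canonical tree structure and lexicographic order.

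The structural heart of the argument is that every member of $\mathcal{A}\otimes\mathcal{B}_V$ arises by applying the Suslin operation to a scheme of rectangles. Concretely, writing $\mathcal{R}=\{A\times F:A\in\mathcal{A},\ F\subseteq V\text{ closed}\}$, one checks that the complement of each element of $\mathcal{R}$ lies in $\mathcal{R}_\sigma$, so that $\mathcal{A}\otimes\mathcal{B}_V=\sigma(\mathcal{R})$ is contained in the closure of $\mathcal{R}$ under the Suslin operation; thus I may fix a regular Suslin scheme $(A_s\times F_s)_{s\in\mathbb N^{<\mathbb N}}$, with $A_s\in\mathcal{A}$ and $F_s$ closed, whose Suslin sum is $B$. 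Next I would invoke the fact that, because $\mu$ is a measure and $\mathcal{A}$ is its complete Lebesgue $\sigma$-field, the $\mu$-measurable sets are stable under the Suslin operation (Lusin's theorem). This immediately yields that the projection $\pi_{[0,1]}(B)$, and more generally every projection of a subscheme, is $\mu$-measurable, which is exactly what makes a measurable choice possible.

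Finally I would build $h$ by the leftmost-branch method. For a fixed $t$ in the projection there is at least one $\sigma\in\mathbb N^{\mathbb N}$ with $t\in A_{\sigma|n}$ for every $n$; I would select, coordinate by coordinate, the lexicographically least such branch and let $h(t)$ be the unique point of $\bigcap_n F_{\sigma|n}$ it determines, regularity of the scheme forcing this intersection to be a singleton lying in $B_t$. The event ``the $n$-th chosen coordinate takes a prescribed value'' is assembled from the $\mu$-measurable projections of the previous step by countable set operations, so $t\mapsto h(t)$ is $\mu$-measurable, and by construction $(t,h(t))\in B$ for every $t$. The main obstacle, and the only genuinely deep input, is the stability of $\mu$-measurability under the Suslin operation (the analytic-sets core of the matter); once that is in hand, the reduction to $2^{\mathbb N}$, the scheme representation, and the leftmost-branch selection are routine bookkeeping.
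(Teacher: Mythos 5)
The paper does not actually prove this statement: Theorem~\ref{VNT} is imported as a black box from Srivastava \cite[Corollary~5.5.8]{S}, so there is no internal proof to compare yours against, and any proof you give is necessarily ``a different route from the paper.'' Measured against the cited source, your sketch is a correct outline of the classical, self-contained argument: represent each member of $\mathcal{A}\otimes\mathcal{B}_V$ as a Suslin sum over a scheme of rectangles (your reduction is sound, since $(A\times F)^c=(A^c\times V)\cup([0,1]\times F^c)\in\mathcal{R}_\sigma$, and $\mathcal{S}(\mathcal{R})$ is closed under countable unions and intersections), invoke the Lusin--Sierpi\'nski theorem that $\mu$-measurable sets of a complete measure are stable under the Suslin operation --- you correctly isolate this as the only deep input --- and then perform a leftmost-branch selection whose coordinate events are assembled from projections of subschemes. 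Srivastava's own derivation of Corollary~5.5.8 is organized differently: he first proves the Jankov--von Neumann uniformization theorem for analytic sets ($\sigma(\Sigma^1_1)$-measurable selections), then writes a set of $\mathcal{A}\otimes\mathcal{B}_V$ as a Borel set in the product modulo a $\mu$-null slice, and patches the selection on the null part using completeness of $\mu$; both routes rest on the same Suslin-operation fact. You also correctly flag the hypothesis the paper's statement leaves implicit --- every vertical section $B_t$ must be non-void for a selection defined on all of $[0,1]$ to exist --- which indeed holds in the paper's applications (in Theorem~\ref{3t1} the sections of $G(F_1)$ are non-empty precisely because the relevant state spaces are not singletons on $A$).

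Two points in your sketch need care, though both are fixable with tools you already introduced. First, the leftmost branch must be selected through the nodes $s$ such that $t$ lies in the projection of the subscheme rooted at $s$ (equivalently, through the pruned scheme): a branch $\sigma$ chosen merely so that $t\in A_{\sigma|n}$ for all $n$ may pass through empty sets $F_{\sigma|n}$, in which case $\bigcap_n F_{\sigma|n}$ is empty rather than a singleton and the construction breaks. Since you already observed that all subscheme projections are $\mu$-measurable, selecting via those sets is both the correct and the measurable choice. Second, producing a scheme that is simultaneously monotone and has $\operatorname{diam}(F_s)\to 0$ along branches is not automatic from $\sigma(\mathcal{R})\subseteq\mathcal{S}(\mathcal{R})$; it requires splitting each $F_s$ into countably many closed pieces of small diameter and re-indexing the scheme, which should be stated rather than absorbed into the word ``regular.'' Finally, the preliminary reduction to $V=2^{\mathbb{N}}$ is superfluous: the lexicographic selection takes place on the index tree $\mathbb{N}^{<\mathbb{N}}$ of the scheme, not in $V$, so the argument works verbatim in any Polish space and the Borel-isomorphism step can simply be deleted.
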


 A characterization of the smooth points of $L^{1}([0,1],X)$ was given in \cite[Theorem~1.1]{DK}. In the proof, the authors made use of Kuratowski and Ryll-Nardzewski's measurable selection theorem. In the below, we provide an alternative proof to \cite[Theorem~1.1]{DK}, obtained as an application of Von Neumann's selection theorem.

Let $f\in L^1([0,1],X)$ and let $Z=\{t\in[0,1]:\|f(t)\|=0\}$. Then we say $f\neq0$ almost everywhere if $\mu(Z)=0$. Let $K$ be a Polish space and let $T:[0,1]\to 2^{K}$ be a set-valued map. We define the graph $G(T)=\{(t,k)\in [0,1]\times K: k\in T(t)\}$ of the map $T$. Given a complex number $z\in \mathbb{C}$, we use $Re(z)$ to denote its real part.

\begin{theorem}\label{3t1}
   Assume $X^*$ is separable. Let $ f \in L^{1}([0,1],X)$ be such that $ \|f\| = 1 $, and $f(t)\neq 0$ almost everywhere. Then $ f $ is smooth in $L^{1}([0,1],X)$ if and only if $ \frac{f(t)}{\|f(t)\|} $ is smooth in $X$ almost everywhere.
\end{theorem}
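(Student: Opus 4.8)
The plan is to identify the state space $S_f$ with a set of measurable selections of the multifunction $t\mapsto S_{f(t)/\|f(t)\|}$, and then to read off smoothness of $f$ from the a.e.\ smoothness of $f(t)/\|f(t)\|$. Since $X^*$ is separable it has the RNP, so I may use the identification $L^1([0,1],X)^*=L^\infty([0,1],X^*)$ recorded in the introduction. First I would establish the pointwise description of $S_f$: if $g\in L^\infty([0,1],X^*)$ with $\|g\|_\infty\le 1$ and $\int_0^1 g(t)(f(t))\,d\mu=1$, then $Re\,g(t)(f(t))\le|g(t)(f(t))|\le\|f(t)\|$ pointwise, while $\int_0^1 Re\,g(t)(f(t))\,d\mu=Re\,1=1=\int_0^1\|f(t)\|\,d\mu$; as the integrand of the difference is nonnegative, equality must hold a.e., which forces $g(t)(f(t))=\|f(t)\|$ a.e. Because $f(t)\neq 0$ a.e., this says exactly that $g(t)\in S_{f(t)/\|f(t)\|}$ a.e. Conversely any essentially bounded selection of this multifunction lies in $S_f$. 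Thus $S_f$ is precisely the collection of (classes of) measurable selections $g$ with $g(t)\in S_{f(t)/\|f(t)\|}$ a.e., and the theorem reduces to comparing singleton-ness of $S_f$ with a.e.\ singleton-ness of the fibres.

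The second ingredient is joint measurability. I would check that $(t,x^*)\mapsto x^*(f(t))$ is $\mathcal{A}\otimes\mathcal{B}_{X^*}$-measurable, where $X^*$ carries its (Polish) norm topology: this holds when $f$ is a simple function, since evaluation is continuous in $x^*$ and measurable in $t$, and it transfers to general $f$ through Bochner approximation by simple functions. Consequently the set $B=\{(t,x^*):\|x^*\|\le 1,\ x^*(f(t))=\|f(t)\|\}$ belongs to $\mathcal{A}\otimes\mathcal{B}_{X^*}$, and for a.e.\ $t$ its section is exactly $S_{f(t)/\|f(t)\|}$.

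For the ``if'' direction, assume $f(t)/\|f(t)\|$ is smooth a.e., so the fibres are a.e.\ singletons $\{g_0(t)\}$. Applying Von Neumann's theorem (Theorem~\ref{VNT}) to $B$ yields a measurable selection, which must agree a.e.\ with $g_0$; hence $g_0$ is (the class of) a measurable, essentially bounded function, so $g_0\in S_f$. By the pointwise description, every element of $S_f$ equals $g_0$ a.e., so $S_f=\{g_0\}$ and $f$ is smooth.

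For the ``only if'' direction I argue by contraposition. Suppose the set $A$ where $f(t)/\|f(t)\|$ fails to be smooth has positive measure. Writing $A=\bigcup_n A_n$ with $A_n=\{t\in A:\mathrm{diam}(S_{f(t)/\|f(t)\|})>1/n\}$, some $A_n$ has positive measure. On $A_n$ I would apply Von Neumann's theorem to
\[
B'=\{(t,x^*,y^*)\in A_n\times X^*\times X^*:x^*,y^*\in S_{f(t)/\|f(t)\|},\ \|x^*-y^*\|\ge 1/n\},
\]
which lies in $\mathcal{A}\otimes\mathcal{B}_{X^*\times X^*}$ by the joint-measurability step, to obtain measurable selections $g_1,g_2$ with $g_1(t)\neq g_2(t)$ on $A_n$; extending both by a common measurable selection of the fibres off $A_n$ produces two distinct elements of $S_f$, so $f$ is not smooth. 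The step I expect to be the main obstacle is the measurability bookkeeping: verifying joint measurability of the evaluation map, confirming that $B$ and $B'$ genuinely belong to the relevant product $\sigma$-algebras so that Von Neumann's selection theorem applies, and checking that the resulting selections define genuine classes in $L^\infty([0,1],X^*)$.
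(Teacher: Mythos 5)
Your overall architecture is the same as the paper's: the identification $L^{1}([0,1],X)^*=L^{\infty}([0,1],X^*)$, the pointwise description of $S_f$ as the essentially bounded selections of $t\mapsto S_{f(t)/\|f(t)\|}$, the joint measurability of $(t,x^*)\mapsto x^*(f(t))$ via simple-function approximation, and an appeal to Von Neumann's selection theorem. Your ``if'' direction is correct (and can be shortened: $S_f\neq\emptyset$ is automatic from Hahn--Banach, so you only need that any two elements of $S_f$ agree almost everywhere, with Pettis' theorem guaranteeing that selections are strongly measurable).

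The genuine gap is in the ``only if'' direction, and it is exactly the point you flagged but did not resolve: the measurability of the sets $A_n=\{t\in A:\mathrm{diam}(S_{f(t)/\|f(t)\|})>1/n\}$ is neither established nor routine, and the argument cannot run without it. Knowing only that $A_n$ has positive \emph{outer} measure is not enough (such a set can have inner measure zero, so you cannot pass to a measurable subset of positive measure), your set $B'$ belongs to $\mathcal{A}\otimes\mathcal{B}_{X^*\times X^*}$ only if $A_n$ is measurable, and Theorem~\ref{VNT} as quoted produces a selection defined for \emph{all} $t\in[0,1]$, hence implicitly requires every section to be nonempty --- which fails for your $B'$ off $A_n$. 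Obtaining measurability of $A_n$ is also not a soft closedness argument: the set $\{x\in S(X):\exists\, x_1^*,x_2^*\in S_x,\ \|x_1^*-x_2^*\|\geq 1/n\}$ need not be norm closed, because along a weak$^*$ convergent net of pairs the norm distance can collapse (the norm is only weak$^*$ lower semicontinuous, which gives the inequality in the wrong direction). The paper sidesteps all of this: it gets measurability of $A$ itself from Mazur's theorem (the smooth points of $S(X)$ form a $G_\delta$ when $X$ is separable, which holds since $X^*$ is separable), fixes one $h\in S_f$, and removes the graph $\{(t,h(t)):t\in A\}$ from the graph of the full multifunction; the resulting multifunction has nonempty values everywhere, so Theorem~\ref{VNT} applies as stated, and the selection differs from $h$ at every point of $A$. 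If you want to keep your diameter decomposition, the standard repairs are either (i) replace the norm by the metric $d^*$ of the weak$^*$ topology on $X_1^*$, as in the paper's Lemma~\ref{L1}, so that $A_n$ becomes the preimage of a closed set under a measurable map, or (ii) invoke the stronger form of the von Neumann theorem that also asserts measurability of projections of product-measurable sets (an analytic-sets fact not contained in the statement of Theorem~\ref{VNT} you are using). Without one of these extra ingredients, your contrapositive argument does not go through.
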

\begin{proof}
 Since the Lebesgue $\sigma$-field is complete, by discarding null sets if necessary, we consider all the functions are everywhere defined. Since $f\neq0$ almost everywhere, we will assume $f(t)\neq0$ for all $t\in [0,1]$. Let $A=\{t\in[0,1]:\frac{f(t)}{\|f(t)\|}\text{ is not smooth}\}$. Since $X$ is separable, by S. Mazur's result, we have that the set of all smooth points in $S(X)$ forms a $G_{\delta}$ set (see \cite{MS}). In particular, it is a measurable set. Now we consider a function $g:[0,1]\to \mathbb{C}$ such that $g(t)=\frac{f(t)}{\|f(t)\|}$ almost everywhere. Since $f:[0,1]\to X$ is a measurable function, we have that $g$ is a measurable function. Thus the preimage under $g$ of the set of all smooth points in $S(X)$ is measurable in $[0,1]$, that is, $A^{c}$ is measurable set, and hence $A$ is a measurable set. Fix $h\in S_{f}$. Then we have that $h(f)=\int_{[0,1]}h(t)(f(t))dt=1$. So we get
    \[
   1= \int_{[0,1]}\|f(t)\|dt=\int_{[0,1]}h(t)(f(t))dt\leq \int_{[0,1]}|h(t)(f(t))|dt\leq \int_{[0,1]}\|f(t)\|dt.
    \]
    This gives that $|h(t)(f(t))|=\|f(t)\|$ almost everywhere. Compairing the real parts of the equation $\int_{[0,1]}h(t)(f(t))dt=1$, we get that $\int_{[0,1]}Re(h(t)(f(t)))dt=1=\int_{[0,1]}\|f(t)\|dt$. This implies $\int_{[0,1]}(\|f(t)\|-Re(h(t)(f(t))))dt=0$. Since $\|f(t)\|-Re(h(t)(f(t)))\geq 0$, we have $Re(h(t)(f(t)))=\|f(t)\|$ almost everywhere. So we  have that $Re(h(t)(f(t)))=|h(t)(f(t))|=\|f(t)\|$ almost everywhere. Thus $h(t)(f(t))=\|f(t)\|$ almost everywhere. We may assume that $h(t)\in S_{\frac{f(t)}{\|f(t)\|}}$ for all $t\in [0,1]$. If $\mu(A)>0$, then we show that there exists another $h'\in S_{f}$ which differs from $h$ on $A$. As $X^*$ is a separable space, we have that $S(X^*)$ is a Polish space with respect to the norm topology. Let $ F:[0,1] \to 2^{S(X^*)} $ be defined as $F(t) = S_{\frac{f(t)}{\|f(t)\|}}$ for $t\in [0,1]$. The graph of $F$ is given by $G(F) = \{ (t, x^*)\in[0,1]\times S(X^*)  :x^*(f(t))= \|f(t)\| \}$.

We claim that $ G(F) $ is a measurable set. We will first show that $(t,x^*)\to x^*(f(t))$ is a measurable function by showing that it is the limit of a sequence of measurable functions. We first consider the case when $f = x_0 \chi_E$ for some measurable set $E$ and $x_{0}\in X$. Then $x^*(f(t))=x_0(x^*).\chi_E(t)$. Since we can consider $x_{0}\in X^{**}$ to be a continuous function on $S(X^*)$, we get $(t, x^*) \to x^*(f(t))$ is a measurable function. Since a simple function is a finite sum of functions of the above type and the stability of measurable functions under finite sums holds true (see \cite[Chapter~II]{DU}), we have that $(t,x^*) \to x^*(f(t))$ is measurable when $f$ is simple. In general, since $f$ is strongly measurable, we choose a sequence of simple functions $ \{s_n\}_{n\in \mathbb{N}} $ such that $s_n(t) \to f(t)$  almost everywhere in the norm. Thus $x^*(s_n(t)) \to x^*(f(t))$ for all $x^*\in S(X^*)$. It follows that $(t, x^*)\to x^*(f(t)) $ is measurable as it is the limit of a sequence of measurable functions (see \cite[Chapter~II]{DU}). 
Since $f$ is measurable and $\|.\|$ is continuous, we have $t \to \|f(t)\|$ composed with the projection, $(t,x^*)\to t$ is measurable. Therefore $ G(F) $ is precisely the zero set of the measurable function $(t,x^*) \to x^*(f(t)) - \|f(t)\| $, hence it is a measurable set.

% Now $S(X^*)=\bigcap_{n\in \mathbb{N}}\{x^*\in X_1^*:\|x^*\|>1-\frac{1}{n}\}$. Since $\|.\|$ is weak$^*$ lower semi-continuous on $X^*$, $\{x^*\in X_1^*:\|x^*\|>1-\frac{1}{n}\}$ is weak$^*$ open set for each $n\in \mathbb{N}$. Hence $S(X^*)$ is a weak$^*$ $G_\delta$ subset of $X^*_1$.

Define $ F_1: [0,1] \to 2^{S(X^*)} $ by
\[
F_{1}(t) =
\begin{cases} 
S_{\frac{f(t)}{\|f(t)\|}}\setminus\{h(t)\}, & t \in A , \\
F(t), & t \notin A.
\end{cases}
\]
Then
\[
G(F_1) = G(F) \setminus \{(t, h(t)): t \in A\}.
\]
Since $ h $ is a measurable function and $G(F)$ is a measurable set, we get that $ G(F_1) $ is a measurable set. Applying Von Neumann's selection theorem to $G(F_1)$, we get a measurable selection $ h^{'}: [0,1] \to S(X^*)$ such that $h^{'}(t) \neq h(t)$  $\text{for all  } t \in A$. Since $X^*$ is separable, by Pettis measurability theorem (See \cite[Pg.~42]{DU}), we have that the function $h:[0,1]\to S(X^*)$ is strongly measurable. That is, $h\in L^{\infty}([0,1],X^*)$. This implies $ h^{'}$ and $h$ are two distinct elements in $S_{f}$ and hence $f$ is not smooth. 

Conversely, let $\mu(A)=0$ and let $g_{1},g_{2}\in S_{f}$. Then $\int_{[0,1]}{g_{i}(t)(f(t))dt=1=\int_{[0,1]}\|f(t)\|dt\text{ for }i=1,2}$. This implies $\int_{[0,1]}{(\|f(t)\|-g_{i}(t)(f(t)))dt=0}$ for $i=1,2$. Thus $g_{i}(t)(f(t))=\|f(t)\|$ almost everywhere. By the hypothesis $g_{1}(t)=g_{2}(t)$ almost everywhere because $\{t\in [0,1]: g_{1}(t)\neq g_{2}(t)\}\subseteq A$. Thus $f\in S(L^{1}([0,1],X))$ is smooth.
\end{proof}
    If $f\in S(L^1([0,1],X))$ is smooth, then $S_{f}$ is a singleton. Thus $S_{f}$ is norm compact. In the next theorem we show that the converse is also true. In particular, $S_{f}$ has finitely many independent vectors ($k$-smooth) implies $S_{f}$ is norm compact so it is a singleton. We begin by recalling that $(X_{1}^*, w^*)$ is metrizable whenever $X$ is a separable space (see \cite[Theorem~3.16]{RW}). Moreover, if $d^*$ denotes the corresponding metric, then we have $d^*(x^*, y^*)\leq \|x^*-y^*\|$ for all $x^*, y^*\in X_{1}^*$ (see \cite[Pg.~69]{RW}).
    
 We are grateful to the referee for the valuable suggestions incorporated in Lemma~\ref{L1} and Theorem~\ref{Th316}.
    
    \begin{lemma}\label{L1}
        Let $X$ be a separable space and let $f\in L^{1}([0,1],X)$ be a non-zero function such that $\|f\|=1$ and $f(t)\neq0$ almost everywhere. For each $k\in \mathbb{N}$, the set $A_{k}=\{t\in [0,1]:d^*(x_{t}^*, y_{t}^*)\geq \frac{1}{k}\textnormal{ for some }x_{t}^*, y_{t}^*\in S_{\frac{f(t)}{\|f(t)\|}}\}$ is a measurable set.
    \end{lemma}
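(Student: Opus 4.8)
The plan is to realize $A_k$ as the projection onto $[0,1]$ of a measurable set living in a product with a suitable compact metrizable space, and then to invoke the measurable projection theorem. The first observation is that, since $X$ is separable, $(X_1^*, w^*)$ is not merely metrizable by $d^*$ but is weak$^*$ compact by Banach--Alaoglu, so $(X_1^*, d^*)$ is a compact metric space, hence Polish. This is the correct Polish space to work in: unlike in Theorem~\ref{3t1}, we assume only $X$ separable (not $X^*$ separable), so we cannot use the norm topology on $S(X^*)$. Instead each state space $S_{f(t)/\|f(t)\|}$ is a $d^*$-closed subset of $V := (X_1^*, d^*)$.

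The key preliminary is the joint measurability of the graph map. Exactly as in the proof of Theorem~\ref{3t1} --- approximating $f$ by simple functions $s_n \to f$ a.e.\ in norm, using that each $x_0 \in X \subseteq X^{**}$ is weak$^*$ continuous (hence Borel) on $X_1^*$, and that $|x^*(s_n(t)) - x^*(f(t))| \le \|s_n(t) - f(t)\| \to 0$ --- one shows that $(t, x^*) \mapsto x^*(f(t))$ is $\mathcal{A} \otimes \mathcal{B}_V$ measurable. Crucially this argument needs only $X$ separable. Since $t \mapsto \|f(t)\|$ is measurable, and since $x^*(x)=1$ with $\|x\|=1$ forces $\|x^*\|=1$ when $x^* \in X_1^*$, the graph
\[
G(F) = \{(t, x^*) \in [0,1] \times V : x^*(f(t)) = \|f(t)\|\}
\]
is the zero set of a measurable function, hence measurable, where $F(t) = S_{f(t)/\|f(t)\|}$.

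Next I would pass to $V \times V$, again compact metrizable (Polish), and set
\[
B_k = \{(t, x^*, y^*) : x^*(f(t)) = \|f(t)\|, \ y^*(f(t)) = \|f(t)\|, \ d^*(x^*, y^*) \ge 1/k\}.
\]
The first two constraints are the preimages of $G(F)$ under the two coordinate projections $[0,1]\times V \times V \to [0,1]\times V$, hence measurable; the third is the preimage of the $d^*$-closed set $\{d^* \ge 1/k\} \subseteq V \times V$ under the continuous metric $d^*$, hence Borel. Thus $B_k \in \mathcal{A} \otimes \mathcal{B}_{V \times V}$, and by construction $t \in A_k$ iff there exist $x^*, y^* \in F(t)$ with $d^*(x^*, y^*) \ge 1/k$, i.e.\ $A_k$ is exactly the projection of $B_k$ onto $[0,1]$.

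Finally I would conclude via the measurable projection theorem: the projection onto $[0,1]$ of a set in $\mathcal{A} \otimes \mathcal{B}_{V \times V}$ is $\mathcal{A}$-measurable, because $\mathcal{A}$ is the ($\mu$-complete) Lebesgue $\sigma$-field and $V \times V$ is Polish (see \cite{S}). The main obstacle is precisely this last step: the projection of a product-measurable set is in general only analytic, not Borel, so the argument genuinely relies on the completeness of Lebesgue measure (equivalently, on the universal measurability of analytic sets), the same circle of ideas underlying Von Neumann's selection theorem (Theorem~\ref{VNT}), rather than on any elementary Fubini-type fact. The only other point demanding care is the systematic use of the weak$^*$ metric $d^*$ and the weak$^*$-compact Polish space $(X_1^*, d^*)$ throughout, since norm-separability of $X^*$ is unavailable here.
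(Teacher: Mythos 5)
Your proof is correct, but it takes a genuinely different route from the paper's. The paper never leaves the base space: it defines $B_{k}=\{x\in S(X):d^*(x_{1}^*,x_{2}^*)\geq \frac{1}{k}\textnormal{ for some }x_{1}^*,x_{2}^*\in S_{x}\}$ and proves directly that $B_{k}$ is \emph{norm closed} in $S(X)$ --- given $x_{n}\to x$ in norm with witnesses $x_{1,n}^*,x_{2,n}^*\in S_{x_{n}}$, weak$^*$ (sequential) compactness of $X_{1}^*$ yields weak$^*$ limits $x_{1}^*,x_{2}^*$, which still satisfy $d^*(x_{1}^*,x_{2}^*)\geq\frac{1}{k}$ because $d^*$ metrizes the weak$^*$ topology, and which lie in $S_{x}$ because $1=x_{i,n}^*(x_{n})\to x_{i}^*(x)$; then $A_{k}=g^{-1}(B_{k})$ for the measurable map $g(t)=\frac{f(t)}{\|f(t)\|}$, so $A_{k}$ is measurable. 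This uses nothing beyond metrizability, compactness and continuity, and as a by-product isolates a fact of independent interest (the set of points of $S(X)$ whose state space has $d^*$-diameter at least $\frac{1}{k}$ is closed). Your argument instead lifts everything to $[0,1]\times X_{1}^*\times X_{1}^*$: joint measurability of $(t,x^*)\mapsto x^*(f(t))$ (correctly adapted to the weak$^*$ Polish space $(X_{1}^*,d^*)$, since norm-separability of $X^*$ is not assumed here), product-measurability of your set $B_{k}$, and then the measurable projection theorem, whose validity rests on universal measurability of analytic sets and completeness of the Lebesgue $\sigma$-field. That is heavier machinery, but it is the same descriptive-set-theoretic toolkit the paper already invokes for Theorems~\ref{3t1} and~\ref{Th316}, and it is more robust: it would still work if the defining condition on the pair $(x_{t}^*,y_{t}^*)$ did not produce a closed subset of $S(X)$, whereas the paper's pullback argument depends essentially on the closedness of $B_{k}$. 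Both proofs correctly identify that the state-space constraint $x^*(f(t))=\|f(t)\|$ automatically forces $\|x^*\|=1$, so working in $X_{1}^*$ rather than $S(X^*)$ causes no loss.
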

    \begin{proof}
        Since $X$ is a separable space, we have that $(X_{1}^*, w^*)$ is metrizable. Let $d^*$ denotes the corresponding metric. For $k\in \mathbb{N}$, we define $B_{k}=\{x\in S(X):d^*(x_{1}^*, x_{2}^*)\geq \frac{1}{k}\textnormal{ for some }x_{1}^*, x_{2}^*\in S_{x}\}$. Observe that $B_{k}$ is closed. Indeed, let $\{x_{n}\}_{n\in \mathbb{N}}\subseteq B_{k}$ such that $x_{n}\to x$ in norm for some $x\in S(X)$. For each $n\in \mathbb{N}$, we have $x_{1, n}^*, x_{2, n}^*\in S_{x_{n}}$ such that $d^*(x_{1, n}^*, x_{2, n}^*)\geq \frac{1}{k}$. Since $(X_{1}^*, w^*)$ is weak$^*$ compact, we may assume that the sequences $\{x_{1, n}^*\}_{n\in \mathbb{N}}, \{x_{2, n}^*\}_{n\in \mathbb{N}}$ are both weak$^*$ convergent to $x_{1}^*, x_{2}^*$, respectively, for some $x_{1}^*, x_{2}^*\in X_{1}^*$. Since $d^*(x_{1, n}^*, x_{2, n}^*)\geq \frac{1}{k}$ for each $n\in \mathbb{N}$, we have $d^*(x_{1}^*, x_{2, }^*)\geq \frac{1}{k}$. Also, since $x_{1, n}^*\to x_{1}^*$ in the weak$^*$ topology and $x_{n}\to x$ in norm, we have $1=x_{1, n}^*(x_{n})\to x_{1}^*(x)$ which implies $x_{1}^*\in S_{x}$. Similarly, $x_{2}^*\in S_{x}$. So we get that $x\in B_{k}$. Thus $B_{k}$ is a closed set. Now we consider a function $g:[0,1]\to \mathbb{C}$ such that $g(t)=\frac{f(t)}{\|f(t)\|}$ almost everywhere. Since $f:[0,1]\to X$ is a measurable function, we have that $g$ is a measurable function. Consequently, $g^{-1}(B_{k})\subseteq [0,1]$ is a measurable set, that is, $A_{k}$ is a measurable set. This completes the proof.
    \end{proof}
    We will make use of Lemma~\ref{L1} and the fact that $d^*(x^*, y^*)\leq \|x^*-y^*\|$ for all $x^*, y^*\in X_{1}^*$ to establish the following theorem.
\begin{theorem}\label{Th316}
    Assume $X^*$ is separable. Let $f\in L^{1}([0,1],X)$ be such that $\|f\|=1$, and $f(t)\neq0$ almost everywhere. Then $S_f$ is norm compact if and only if $f$ is smooth.
\end{theorem}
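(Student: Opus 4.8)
The easy direction is immediate: if $f$ is smooth then $S_f$ is a singleton, hence norm compact. For the converse I would argue by contraposition, showing that if $f$ fails to be smooth then $S_f$ cannot be norm compact. By Theorem~\ref{3t1}, $f$ is not smooth precisely when the set $A=\{t\in[0,1]:\frac{f(t)}{\|f(t)\|}\text{ is not smooth}\}$ has positive measure. Writing $A=\bigcup_{k\in\mathbb N}A_k$ with $A_k$ as in Lemma~\ref{L1} --- this union is exhaustive because $d^*$ is a metric, so two distinct states are separated by at least $\frac{1}{k}$ for some $k$ --- positivity of $\mu(A)$ forces $\mu(A_{k_0})>0$ for some fixed $k_0$. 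It then suffices to manufacture infinitely many elements of $S_f$ that are pairwise separated by a fixed positive amount in the $L^\infty([0,1],X^*)$ norm, since such a set has no norm-convergent subsequence.

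The first step toward this is a measurable selection of a separated pair of states over $A_{k_0}$. Working in the Polish space $V=S(X^*)\times S(X^*)$, I would consider
\[
B=\Bigl\{(t,x^*,y^*):x^*(f(t))=y^*(f(t))=\|f(t)\|,\ d^*(x^*,y^*)\ge \tfrac{1}{k_0}\Bigr\}.
\]
The two evaluation constraints define a measurable set by the same argument used for $G(F)$ in the proof of Theorem~\ref{3t1} (the map $(t,x^*)\mapsto x^*(f(t))-\|f(t)\|$ is measurable), while $\{(x^*,y^*):d^*(x^*,y^*)\ge \frac{1}{k_0}\}$ is closed, hence Borel, in the norm topology of $V$; thus $B\in\mathcal A\otimes\mathcal B_V$. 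The projection of $B$ onto $[0,1]$ is exactly $A_{k_0}$, so applying Von Neumann's theorem (Theorem~\ref{VNT}) over $A_{k_0}$ yields measurable selections $g_1,g_2:A_{k_0}\to S(X^*)$ with $g_1(t),g_2(t)\in S_{\frac{f(t)}{\|f(t)\|}}$ and $d^*(g_1(t),g_2(t))\ge \frac{1}{k_0}$ for every $t\in A_{k_0}$. By the inequality $d^*(x^*,y^*)\le\|x^*-y^*\|$ recorded before the lemma, this gives $\|g_1(t)-g_2(t)\|\ge \frac{1}{k_0}$ on $A_{k_0}$.

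Finally, fix any $h_0\in S_f$ (a measurable state selection, which exists by the argument of Theorem~\ref{3t1}) and use that Lebesgue measure is non-atomic to partition $A_{k_0}=\bigsqcup_{n\ge1}E_n$ with $\mu(E_n)>0$. For each $n$ define $h^{(n)}$ to equal $g_2$ on $E_n$, $g_1$ on $A_{k_0}\setminus E_n$, and $h_0$ off $A_{k_0}$. Each $h^{(n)}$ is a bounded measurable selection into the pointwise state spheres, so $\int_{[0,1]}h^{(n)}(t)(f(t))\,dt=\int_{[0,1]}\|f(t)\|\,dt=1$ and $h^{(n)}\in S_f$. For $m\ne n$, on the positive-measure set $E_m$ we have $h^{(m)}=g_2$ and $h^{(n)}=g_1$, whence $\|h^{(m)}-h^{(n)}\|_\infty\ge \frac{1}{k_0}$; thus $\{h^{(n)}\}_{n\ge1}$ is an infinite $\frac{1}{k_0}$-separated subset of $S_f$, contradicting norm compactness. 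I expect the main obstacle to be the selection step: verifying that $B$ genuinely lies in the product $\sigma$-algebra (in particular the joint measurability of the evaluation map and the Borel character of the $d^*$-separation condition in the norm-Polish structure) and that Von Neumann's theorem can be invoked fiberwise over $A_{k_0}$. Once the separated pair $(g_1,g_2)$ is in hand, the remaining combinatorial construction and the passage from $d^*$ to the norm are routine.
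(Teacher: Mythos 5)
Your proposal is correct and follows essentially the same route as the paper: contraposition via Theorem~\ref{3t1}, Lemma~\ref{L1} to locate a set $A_{k_0}$ of positive measure, non-atomicity of $\mu$ to split it into countably many disjoint positive-measure pieces, and Von Neumann's theorem (Theorem~\ref{VNT}) together with the inequality $d^*(x^*,y^*)\le\|x^*-y^*\|$ to produce an infinite, uniformly norm-separated subset of $S_f$. The only (harmless) deviation is in the selection step: you make a single measurable selection of a separated pair $(g_1,g_2)$ in the doubled Polish space $S(X^*)\times S(X^*)$ and then swap its components across the partition, whereas the paper fixes one $h\in S_f$ and applies the selection theorem once per piece $Z_n$ to the multifunction $t\mapsto S_{\frac{f(t)}{\|f(t)\|}}\setminus B^{o}_{\|.\|}(h(t),\epsilon_0)$; both devices yield the same $\epsilon_0$-separation.
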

\begin{proof}
 Since Lebesgue $\sigma$-field is complete, by discarding null sets if necessary, we consider all the functions are everywhere defined. Since $f\neq0$ almost everywhere, we will assume $f(t)\neq0$ for all $t\in [0,1]$. If $f$ is a smooth point, then $S_{f}$ is a singleton, hence a norm compact set. If $f$ is not smooth, then we show that $S_{f}$ is not norm compact by constructing a sequence in $S_{f}$ with no convergent subsequence. Fix $h\in S_{f}$, and let $A=\{t\in[0,1]:{\frac{f(t)}{\|f(t)\|}}\text{ is not smooth}\}$. From Theorem~\ref{3t1}, we have $\mu(A)>0$. For $k\in \mathbb{N}$, we define $A_{k}=\{t\in [0,1]:d^*(x_{t}^*, y_{t}^*)\geq \frac{1}{k}\textnormal{ for some }x_{t}^*, y_{t}^*\in S_{\frac{f(t)}{\|f(t)\|}}\}$. From Lemma~\ref{L1}, we have that each $A_{k}\subseteq [0,1]$ is measurable. It is easy to see that $A=\bigcup_{k\in \mathbb{N}}A_{k}$. Since $\mu(A)>0$, we have that $\mu(A_{k})>0$ for some $k\in \mathbb{N}$. Let $Z=A_{k}$, $2\epsilon_{0}=\frac{1}{k}$ and $B_{d^*}^{o}(h(t), \epsilon_{0})$ denotes the open $\epsilon_{0}$-ball centred at $h(t)$ in the weak$^*$ metric. Similarly, $B_{\|.\|}^{o}(h(t), \epsilon_{0})$ denotes the open $\epsilon_{0}$-ball centred at $h(t)$ in norm. For each $t\in Z$, the set $S_{\frac{f(t)}{\|f(t)\|}}\setminus B_{d^*}^{o}(h(t), \epsilon_{0})$ is non-empty and measurable. Since the metric $d^*$ satisfies $d^*(x^*,y^*)\leq \|x^*-y^*\|$ for all $x^*,y^*\in X_{1}^* $, we must have that the set $S_{\frac{f(t)}{\|f(t)\|}}\setminus B_{\|.\|}^{o}(h(t), \epsilon_{0})$ is non-empty and measurable for each $t\in Z$. Since $\mu$ is non-atomic and $\mu(Z)>0$, we can choose a sequence $\{Z_{n}\}_{n\in \mathbb{N}}$ of disjoint subsets of $Z$ such that $\mu(Z_{n})>0$ for each $n\in \mathbb{N}$. As $X^*$ is a separable space, we have that $S(X^*)$ is a Polish space with respect to the norm topology.

Define $ F_1: [0,1] \to 2^{S(X^*)} $ by
\[
F_{1}(t) =
\begin{cases} 
S_{\frac{f(t)}{\|f(t)\|}}\setminus B_{\|.\|}^{o}(h(t), \epsilon_{0}), & t \in Z_{1} , \\
S_{\frac{f(t)}{\|f(t)\|}}, & t \notin Z_{1}.
\end{cases}
\]
Then
\[
G(F_1) =\{(t,x^*):x^*(f(t))=\|f(t)\|\} \setminus \{(t,x^*):\|x^*-h(t)\|<\epsilon_{0}, t \in Z_1\}.
\]

We already have shown in the proof of Theorem~\ref{3t1} that the set $\{(t,x^*):x^*(f(t))=\|f(t)\|\}$ is a Borel set. Since the mappings $(t,x^*)\rightarrow x^*-h(t)$ and $\ (x^*-h(t))\rightarrow\|x^*-h(t)\|$ are measurable and continuous, respectively, we have $\{(t,x^*):\|x^*-h(t)\|<\epsilon_{0}, t \in Z_1\}$ is a measurable set. Hence $ G(F_1) $ is a measurable set. Applying Von Neumann's selection theorem to $G(F_{1})$ as in Theorem~\ref{3t1}, we get a selection $h^{'}_1: [0,1] \to S(X^*)$ such that $\|h^{'}_1(t)- h(t)\|\geq\epsilon_{0}\text{ for all  } t \in Z_1$. This implies $ \|h^{'}_1- h\|\geq \epsilon_{0}$. Define $ h_1: [0,1] \to X^* $ by
\[
h_{1}(t) =
\begin{cases} 
h^{'}_{1}(t), & t \in Z_{1} , \\
h(t), & t \notin Z_{1}.
\end{cases}
\]
Fix $h_{0}=h$. Then it is easy to see that $h_{0}$ and  $h_1$ are two distinct elements in $ S_f $ such that $\|h_1- h_{0}\|\geq\epsilon_{0}$ and $h_{0}(t)=h_{1}(t) \text{ for all } t\in Z_{1}^{c}$.
Continuing this process for each $Z_{n}$, we get a sequence $\{h_n\}_{n\geq0}$ such that $\|h_n-h_m\|\geq\epsilon_{0}$ for all $n\neq m$. Hence $S_{f}$ is not norm compact. This completes the proof.
\end{proof}
\begin{remark}\label{naddr}
In Section~\ref{Sec32}, we will see that if a function $f\in L^1([0,1],X)$ is not non-zero almost everywhere, then it can never be a smooth point. This discussion, along with the “not non-zero” case of Theorem~\ref{Th316} in a more general setting, will be considered in the same section.
\end{remark}

We now address the above discussion for vector-valued discrete situations. To begin, we state a well-known result that will help us in making our first observation. We give a short proof for the sake of completeness.

Given $Y\subseteq X$ a closed subspace and $y\in S(Y)$, we denote $S^{Y^*}_{y}=\{y^*\in S(Y^*):y^*(y)=1\}$.
\begin{lemma}\label{2l1}
    Let $Y$ be a proper subspace of $X$ such that $X=Y\bigoplus_{\ell^1} Z$ for some closed subspace $Z$ of $X$. Then, no point in $S(Y)$ is smooth in $X$.
\end{lemma}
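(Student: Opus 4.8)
The plan is to reduce the question to a concrete description of $S_y$ inside $X^*$ via the duality of $\ell^1$-sums. Recall that for $X=Y\bigoplus_{\ell^1}Z$ one has the isometric identification $X^*=Y^*\bigoplus_{\ell^\infty}Z^*$, where a functional $(y^*,z^*)$ acts by $(y^*,z^*)(y+z)=y^*(y)+z^*(z)$ and $\|(y^*,z^*)\|=\max\{\|y^*\|,\|z^*\|\}$. I would state this identification first (it is the standard dual of an $\ell^1$-sum) and then work entirely on the dual side.

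Next, fix $y\in S(Y)$, viewed in $X$ as the element $y+0$. I would compute $S_y$ explicitly. If $(y^*,z^*)\in S_y$, then $\max\{\|y^*\|,\|z^*\|\}=1$ and $y^*(y)=1$; since $\|y\|=1$ the latter forces $\|y^*\|\ge 1$, hence $\|y^*\|=1$ and $y^*\in S^{Y^*}_y$, while $z^*$ is free subject only to $\|z^*\|\le 1$. Thus $S_y=\{(y^*,z^*):y^*\in S^{Y^*}_y,\ \|z^*\|\le 1\}$. The key structural observation is that the $Z^*$-coordinate is completely unconstrained beyond a norm bound.

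Now I would exploit that $Y$ is a \emph{proper} subspace: since $X=Y\bigoplus_{\ell^1}Z$ and $Y\neq X$, the complement $Z$ is nonzero, so $Z^*\neq\{0\}$. By Hahn--Banach choose any $y^*\in S^{Y^*}_y$, and pick $z^*\in Z^*$ with $z^*\neq 0$ and $\|z^*\|\le 1$. Then both $(y^*,0)$ and $(y^*,z^*)$ lie in $S_y$ by the description above, and they are distinct because $z^*\neq 0$. Hence $S_y$ is not a singleton, so $y$ is not smooth in $X$; as $y\in S(Y)$ was arbitrary, no point of $S(Y)$ is smooth in $X$.

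This argument is essentially routine once the dual of the $\ell^1$-sum is in hand, so I do not anticipate a genuine obstacle. The only point requiring care is the verification that $\|y^*\|=1$ (rather than $\le 1$) for any state, which uses $\|y\|=1$ together with $y^*(y)=1$; this is what guarantees that the elements I produce genuinely have dual norm one and thus lie in $S(X^*)$. The nontriviality of $Z$ coming from properness of $Y$ is the hypothesis that makes the construction possible.
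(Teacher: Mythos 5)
Your proposal is correct and follows essentially the same route as the paper: the paper's proof is precisely the observation that $S_y = S^{Y^*}_y \times Z_1^*$ under the identification $X^* = Y^* \bigoplus_{\ell^\infty} Z^*$, from which non-smoothness follows since $Z \neq \{0\}$. You have simply written out the details (the forcing of $\|y^*\| = 1$ and the choice of a nonzero $z^* \in Z_1^*$) that the paper leaves as "easy to check."
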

\begin{proof}
Let $y\in S(Y)$. Then it is easy to check that $S_y=S^{Y^*}_y\times Z_1^*$. Hence the result follows.
\end{proof}

\begin{remark}\label{2p0}
 Let $J$ be an uncountable set. Consider $(x_i)\in {\bigoplus_{\ell^1}}_{i\in J}{X_i}$ and let $\Lambda=\{i\in J:\|x_i\|\neq 0\}$. Observe that $\Lambda$ is atmost countable. Now we can write ${\bigoplus_{\ell^1}}_{i\in J}X_i=\left({\bigoplus_{\ell^1}}_{i\in \Lambda}X_i\right) \bigoplus_{\ell^1} \left({\bigoplus_{\ell^1}}_{i\in \Lambda^c}X_i\right)$. We may consider $(x_i)\in {\bigoplus_{\ell^1}}_{i\in \Lambda}X_i$. Hence by Lemma~\ref{2l1}, $(x_{i})$ is not smooth in ${\bigoplus_{\ell^1}}_{i\in J}{X_i}$.
\end{remark}

Our next step is to examine the above study in the context of spaces of absolutely summable sequences of vectors. Let $\{S_i\}_{i\in \mathbb{N}}$ be a family of subsets of $X^*$. We define $\prod_{i\in \mathbb{N}}S_i=\{(x^*_i):x_i^*\in S_i\}$.
\begin{lemma}\label{nadd}
Let $(x_i)\in S(\ell^1(X_{}))$ such that $x_i\neq 0$ for all $i\in \mathbb{N}$. Then $S_{(x_i)}=\prod_{i=1}^{\infty}S_{\frac{x_i}{\|x_{i}\|}}$.
\end{lemma}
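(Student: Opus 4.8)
The plan is to pass to the standard duality $\ell^1(X)^*=\ell^\infty(X^*)$, under which an element $(x_i^*)$ of the dual acts on $(x_i)\in\ell^1(X)$ by the pairing $(x_i^*)\big((x_i)\big)=\sum_{i=1}^{\infty}x_i^*(x_i)$, with dual norm $\|(x_i^*)\|=\sup_i\|x_i^*\|$. Since $(x_i)\in S(\ell^1(X))$ we have $\sum_{i=1}^{\infty}\|x_i\|=1$, and the series $\sum_i x_i^*(x_i)$ converges absolutely for every dual element. I would then establish the set equality by proving the two inclusions separately.

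The inclusion $\prod_{i=1}^{\infty}S_{\frac{x_i}{\|x_i\|}}\subseteq S_{(x_i)}$ is immediate: given $(x_i^*)$ with each $x_i^*\in S_{\frac{x_i}{\|x_i\|}}$, the condition $\|x_i^*\|=1$ for all $i$ forces $\|(x_i^*)\|=1$, and $x_i^*(x_i)=\|x_i\|$ gives $\sum_i x_i^*(x_i)=\sum_i\|x_i\|=1$, so $(x_i^*)\in S_{(x_i)}$.

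The content lies in the reverse inclusion. Suppose $(x_i^*)\in S_{(x_i)}$, so that $\sup_i\|x_i^*\|=1$ and $\sum_i x_i^*(x_i)=1$. Since this sum equals the real number $1$, I would take real parts at the outset and estimate term by term:
\[
1=\sum_{i=1}^{\infty}\mathrm{Re}\big(x_i^*(x_i)\big)\le\sum_{i=1}^{\infty}\big|x_i^*(x_i)\big|\le\sum_{i=1}^{\infty}\|x_i^*\|\,\|x_i\|\le\sum_{i=1}^{\infty}\|x_i\|=1.
\]
Because the two ends coincide, every inequality in this chain is forced to be an equality.

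The main step, and the only place requiring care, is to convert equality of the \emph{sums} into equality in each \emph{summand}. From $\sum_i\big(\|x_i\|-\mathrm{Re}(x_i^*(x_i))\big)=0$ with every term nonnegative, I would deduce $\mathrm{Re}(x_i^*(x_i))=\|x_i\|$ for each $i$; comparing this against the chain above then yields $\big|x_i^*(x_i)\big|=\|x_i^*\|\,\|x_i\|=\|x_i\|$ for each $i$. Here the hypothesis $x_i\neq0$ (so $\|x_i\|>0$) is exactly what converts $\|x_i^*\|\,\|x_i\|=\|x_i\|$ into $\|x_i^*\|=1$, i.e. $x_i^*\in S(X^*)$; and $\mathrm{Re}(x_i^*(x_i))=|x_i^*(x_i)|=\|x_i\|$ forces $x_i^*(x_i)=\|x_i\|$, i.e. $x_i^*\big(\tfrac{x_i}{\|x_i\|}\big)=1$. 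Hence $x_i^*\in S_{\frac{x_i}{\|x_i\|}}$ for every $i$, giving $(x_i^*)\in\prod_{i=1}^{\infty}S_{\frac{x_i}{\|x_i\|}}$ and completing the proof. The only genuine subtlety is the complex case, handled uniformly by passing to real parts before estimating; the nonvanishing assumption on the $x_i$ is precisely what is needed to recover $\|x_i^*\|=1$ rather than merely $\|x_i^*\|\le1$.
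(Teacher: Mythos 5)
Your proof is correct and takes essentially the same route as the paper's: both pass to the duality $\ell^1(X)^*=\ell^\infty(X^*)$, take real parts to handle the complex case, and exploit equality throughout the chain $1=\sum_i\mathrm{Re}(x_i^*(x_i))\leq\sum_i\|x_i^*\|\,\|x_i\|\leq\sum_i\|x_i\|=1$, together with $\|x_i\|>0$, to force $x_i^*(x_i)=\|x_i\|$ and $\|x_i^*\|=1$ termwise. The only differences are cosmetic: you keep $x_i^*(x_i)$ and extract $\|x_i^*\|=1$ explicitly, whereas the paper normalizes to $x_i^*\bigl(\tfrac{x_i}{\|x_i\|}\bigr)$ from the outset and leaves that norm equality implicit, and you write out the easy inclusion that the paper dismisses as routine.
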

\begin{proof}
If $(x^*_{i})\in S_{(x_i)}$, then $\sum_{n=1}^{\infty}x_{i}^*(x_{i}) = \sum_{n=1}^{\infty}\|x_{i}\|x_{i}^*(\frac{x_{i}}{\|x_{i}\|})=1$. So we get
\[
1=\sum_{n=1}^{\infty}\|x_{i}\|x_{i}^*(\frac{x_{i}}{\|x_{i}\|})\leq \sum_{n=1}^{\infty}\|x_{i}\||x_{i}^*(\frac{x_{i}}{\|x_{i}\|})|\leq1 
\]
Since $\sum_{i=1}^{\infty}\|x_{i}\|=1$, we have that $|x_{i}^*(\frac{x_{i}}{\|x_{i}\|})|=1$ for each $i\in\mathbb{N}$. Also, by compairing real and imaginary part of the equation $\sum_{n=1}^{\infty}\|x_{i}\|x_{i}^*(\frac{x_{i}}{\|x_{i}\|})=1$ as in Theorem~\ref{3t1}, we get that $Re(x_{i}^*(\frac{x_{i}}{\|x_{i}\|}))=|x_{i}^*(\frac{x_{i}}{\|x_{i}\|})|=1$. This implies $x_{i}^*(\frac{x_{i}}{\|x_{i}\|})=1$. Consequently, $x_{i}^*\in S_{\frac{x_i}{\|x_{i}\|}}$. Thus we have $S_{(x_i)}\subseteq\prod_{i=1}^{\infty}S_{\frac{x_i}{\|x_{i}\|}}$. It is easy to show the other inclusion. Hence $S_{(x_i)}=\prod_{i=1}^{\infty}S_{\frac{x_i}{\|x_{i}\|}}$.
\end{proof}
\begin{proposition}\label{2p2}
    Let $(x_i)\in S(\ell^1(X_{}))$. Then $(x_i)$ is smooth in $\ell^1(X_{})$ if and only if $\|x_i\|\neq 0$ for all $i\in \mathbb{N}$ and $\frac{x_i}{\|x_i\|}$ is smooth in $X_{}^*$ for each $i\in \mathbb{N}$.
\end{proposition}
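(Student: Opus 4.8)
The plan is to read off the proposition almost directly from the two structural results already in place: Lemma~\ref{2l1} (with Remark~\ref{2p0}) governs the case where some coordinate vanishes, and Lemma~\ref{nadd} supplies the product description of the state space once no coordinate vanishes. The only genuinely new ingredient is the elementary set-theoretic fact that a product of nonempty sets is a singleton exactly when each factor is. Note that here smoothness of $\frac{x_i}{\|x_i\|}$ is taken in $X$, i.e.\ $\frac{x_i}{\|x_i\|}\in S(X)$ is smooth precisely when its state space $S_{\frac{x_i}{\|x_i\|}}\subseteq X^*$ is a singleton.

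First I would dispose of the condition $\|x_i\|\neq 0$. For the forward direction, suppose $(x_i)$ is smooth but $\|x_{i_0}\|=0$ for some $i_0\in\mathbb{N}$. Writing $\ell^1(X)=Y\bigoplus_{\ell^1}X_{i_0}$, where $Y=\{(y_i)\in\ell^1(X):y_{i_0}=0\}$ and $X_{i_0}$ is the isometric copy of $X$ in the $i_0$-th coordinate, one has an $\ell^1$-decomposition of $\ell^1(X)$. Since $x_{i_0}=0$, the element $(x_i)$ lies in $S(Y)$, so Lemma~\ref{2l1} forces $(x_i)$ to be non-smooth in $\ell^1(X)$, a contradiction. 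This is exactly the reasoning recorded in Remark~\ref{2p0} specialised to the countable case, and it shows $\|x_i\|\neq 0$ for all $i$. For the converse direction the hypothesis already supplies $\|x_i\|\neq 0$, so in both directions we may assume from this point on that every coordinate is nonzero.

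With all coordinates nonzero, Lemma~\ref{nadd} gives the identification $S_{(x_i)}=\prod_{i=1}^{\infty}S_{\frac{x_i}{\|x_i\|}}$, where each factor $S_{\frac{x_i}{\|x_i\|}}$ is nonempty by the Hahn--Banach theorem. The point $(x_i)$ is smooth if and only if $S_{(x_i)}$ is a singleton, and a product of nonempty sets is a singleton if and only if every factor is a singleton; hence $S_{(x_i)}$ is a singleton if and only if $S_{\frac{x_i}{\|x_i\|}}$ is a singleton for each $i$, that is, if and only if each $\frac{x_i}{\|x_i\|}$ is smooth in $X$. Combining this equivalence with the first step yields both directions of the proposition at once.

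I do not expect a substantive obstacle, since the two lemmas already carry the analytic content; the proof is essentially an assembly of \texttt{Lemma~\ref{2l1}} for the degenerate coordinates and \texttt{Lemma~\ref{nadd}} for the product structure. The one point warranting explicit care is the product-singleton equivalence: it relies on each $S_{\frac{x_i}{\|x_i\|}}$ being nonempty (guaranteed by Hahn--Banach), and one should note that the product genuinely sits inside $\ell^\infty(X^*)=\ell^1(X)^*$, since each factor lies in $S(X^*)$ and therefore $\sup_i\|x_i^*\|=1$, which is consistent with the description in Lemma~\ref{nadd}.
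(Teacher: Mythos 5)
Your proof is correct and takes essentially the same route as the paper's: the vanishing-coordinate case is handled by Lemma~\ref{2l1} (via the reasoning of Remark~\ref{2p0}), and the rest follows from the product description of $S_{(x_i)}$ in Lemma~\ref{nadd} together with the observation that a product of nonempty sets is a singleton iff every factor is. You merely make explicit what the paper leaves terse (the concrete $\ell^1$-decomposition isolating the zero coordinate, and the use of Hahn--Banach to guarantee each factor $S_{\frac{x_i}{\|x_i\|}}$ is nonempty), and you correctly read ``smooth in $X^*$'' in the statement as smoothness of $\frac{x_i}{\|x_i\|}$ in $X$.
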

\begin{proof}
   Let $(x_{i})$ be smooth. Clearly, from Remark~\ref{2p0} $\|x_{i}\|\neq0$ for all $i\in \mathbb{N}$. The conclusion now follows from Lemma~\ref{nadd}.
   
   Conversely, let $\|x_{i}\|\neq0$ for all $i\in \mathbb{N}$ and let $\frac{x_i}{\|x_i\|}$ be smooth in $X_{}^*$ for each $i\in \mathbb{N}$. Since $S_{\frac{x_i}{\|x_{i}\|}}$ is a singleton for each $i\in \mathbb{N}$, by Lemma~\ref{nadd}, $S_{(x_{i})}$ is a singleton. Hence $(x_{i})$ is smooth.
\end{proof}
We now consider the above situation in the framework of multismoothness. First observe that any two distinct elements from a state space must be linearly independent. To see this, let $x^*,y^*\in S_{x}$ and $x^*=\alpha y^*$ for some $\alpha\in \mathbb{C}$. Since $x^*(x)=y^*(x)=1$, we have $\alpha=1$. This implies $x^*=y^*$. We will make use of this observation in the next theorem. 

The authors thank the referee for suggesting the second part of the following theorem. 
\begin{theorem}\label{th24}
    Let $(x_i)\in \ell^{1}(X_{})$ be such that $\|(x_i)\|=1$ and $\|x_{i}\|\neq 0$ for all $i\in\mathbb{N}$. Then $(x_i)$ is multismooth of finite order if and only if $\Lambda=\{i\in\mathbb{N}:\frac{x_{i}}{\|x_{i}\|}\text{ is not smooth}\}$ is finite and $\frac{x_{i}}{\|x_{i}\|}$ is multismooth of finite order for each $i\in \Lambda$. Moreover, if for each $i\in \Lambda$, order of $\frac{x_{i}}{\|x_{i}\|}$ is equal to $n_{i}$, then the order of the sequence $(x_{i})$ is equal to $\prod_{i\in \Lambda} n_{i}$.
\end{theorem}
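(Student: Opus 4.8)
The plan is to pass to the product description of the state space given by Lemma~\ref{nadd} and reduce everything to a dimension count over the finitely many non-smooth coordinates. By Lemma~\ref{nadd}, $S_{(x_i)}=\prod_{i=1}^{\infty}S_{\frac{x_i}{\|x_i\|}}$, so a state of $(x_i)$ is a sequence $(x_i^*)$ whose $i$-th entry lies in $S_{\frac{x_i}{\|x_i\|}}$. For $i\notin\Lambda$ the point $\frac{x_i}{\|x_i\|}$ is smooth, so this entry is forced to the unique state $u_i^*$; for $i\in\Lambda$ it ranges over a set whose linear span $W_i=\text{span}\{S_{\frac{x_i}{\|x_i\|}}\}$ has dimension $n_i$. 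Since the order of $(x_i)$ is $\dim(\text{span}\{S_{(x_i)}\})$, the problem becomes a finite computation governed by the coordinates in $\Lambda$.

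First I would establish the equivalence. For the forward direction, if some factor $\frac{x_{i_0}}{\|x_{i_0}\|}$ had infinite order, then choosing countably many linearly independent states there and placing them in the $i_0$-th coordinate while fixing an admissible state in every other coordinate yields infinitely many linearly independent members of $S_{(x_i)}$, so $(x_i)$ would have infinite order; similarly, if $|\Lambda|=\infty$, fixing one state per coordinate and then varying a single coordinate at a time over the infinitely many indices of $\Lambda$ produces infinitely many independent members, again giving infinite order. For the converse, when $\Lambda$ is finite and every $n_i$ is finite, each coordinate in $\Lambda$ contributes the finite-dimensional $W_i$ while all smooth coordinates are scalar multiples of the fixed $u_i^*$, so $\text{span}\{S_{(x_i)}\}$ lies in a finite-dimensional space and $(x_i)$ has finite order.

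It remains to identify the order as $\prod_{i\in\Lambda}n_i$, for which the plan is a coordinatewise product count. For each $i\in\Lambda$ choose states $x_{i,1}^*,\dots,x_{i,n_i}^*\in S_{\frac{x_i}{\|x_i\|}}$ whose span is $W_i$; this is possible because $S_{\frac{x_i}{\|x_i\|}}$ spans $W_i$ and, as observed just before the theorem, distinct states are automatically linearly independent. Form the product family $\{\sigma_{(j_i)}\}$, where $\sigma_{(j_i)}$ has $i$-th entry $x_{i,j_i}^*$ for $i\in\Lambda$ and $u_i^*$ otherwise, as $(j_i)_{i\in\Lambda}$ runs over $\prod_{i\in\Lambda}\{1,\dots,n_i\}$; there are exactly $\prod_{i\in\Lambda}n_i$ such sequences, each lying in $S_{(x_i)}$. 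I would then argue by induction on $|\Lambda|$ that this family is a basis of $\text{span}\{S_{(x_i)}\}$: the inductive step isolates one index $i_0\in\Lambda$, expands a general state along the basis $x_{i_0,1}^*,\dots,x_{i_0,n_{i_0}}^*$ in the $i_0$-th coordinate with the remaining coordinates ranging over the product state space of $\Lambda\setminus\{i_0\}$, and propagates spanning and independence multiplicatively, giving $\dim(\text{span}\{S_{(x_i)}\})=\prod_{i\in\Lambda}n_i$.

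The main obstacle is exactly this multiplicative count. One must verify that the $\prod_{i\in\Lambda}n_i$ product sequences remain linearly independent inside $\ell^{\infty}(X^*)$, and that combining factors introduces no collapse through the shared smooth coordinates, whose $u_i^*$-coefficients are common to every product sequence and therefore couple the factors together. Carefully tracking these cross-coordinate relations in the inductive step—so that the product family both spans and stays independent as the factors are amalgamated—is the delicate point on which the exact value of the order depends.
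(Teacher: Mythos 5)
Your plan retraces the paper's own proof almost step for step: the product description of $S_{(x_i)}$ from Lemma~\ref{nadd}, a coordinate-variation argument for the equivalence (the paper builds nested product sets $E_j$ whose spans strictly grow, which is exactly your ``vary one coordinate at a time over $\Lambda$'' device), and, for the count, the same product family: bases $B_i\subseteq S_{\frac{x_i}{\|x_i\|}}$ for $i\in\Lambda$, a fixed admissible entry elsewhere, giving $\prod_{i\in\Lambda}n_i$ product sequences that span $\mathrm{span}(S_{(x_i)})$. The equivalence half of your proposal is sound. The problem is the step you yourself flagged as delicate and deferred to an induction: the linear independence of the product family. That step does not just need care; it \emph{fails} whenever $|\Lambda|\geq 2$. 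Take distinct $i,j\in\Lambda$, choose $a_1\neq a_2\in S_{\frac{x_i}{\|x_i\|}}$ and $b_1\neq b_2\in S_{\frac{x_j}{\|x_j\|}}$, and let $\sigma_{pq}$ be the product sequence with entry $a_p$ in coordinate $i$, entry $b_q$ in coordinate $j$, and the same fixed entries in every other coordinate. Then
\[
\sigma_{11}-\sigma_{12}-\sigma_{21}+\sigma_{22}=0,
\]
since in coordinate $i$ the combination reads $a_1-a_1-a_2+a_2=0$, in coordinate $j$ it reads $b_1-b_2-b_1+b_2=0$, and in all remaining coordinates the coefficients sum to $0$. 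The collapse is structural, not a cross-coordinate subtlety to be tracked: two product states differing only in coordinate $j$ have difference supported entirely in coordinate $j$, so $\sigma_{11}-\sigma_{12}=\sigma_{21}-\sigma_{22}$ no matter what the other entries are. Consequently your induction on $|\Lambda|$ cannot close, because the multiplicative count is simply not the dimension.

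What the span dimension actually is can be read off from the affine structure: every element of $S_{\frac{x_i}{\|x_i\|}}$ takes the value $1$ at $\frac{x_i}{\|x_i\|}$, so the differences within coordinate $i$ span only an $(n_i-1)$-dimensional space, supported in coordinate $i$ alone. Writing any product state as a fixed base state plus coordinatewise differences gives the upper bound, and the disjointly supported difference spaces together with one base state (which evaluates to $1$ at $(x_i)$, while all differences evaluate to $0$) give the matching lower bound: $\dim\mathrm{span}(S_{(x_i)})=1+\sum_{i\in\Lambda}(n_i-1)$, which agrees with $\prod_{i\in\Lambda}n_i$ only when $|\Lambda|\leq 1$. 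A concrete witness: $X=\ell^{\infty}_{2}$, $x_1=x_2=\frac{1}{4}(1,1)$ and $x_i$ small positive multiples of the smooth point $(1,0)$ for $i\geq 3$; here $n_1=n_2=2$, but the states are exactly the sequences $((a,1-a),(c,1-c),(1,0),(1,0),\dots)$ with $a,c\in[0,1]$, whose span is $3$-dimensional, not $4$. You should be aware that the paper's own verification of independence (``equating the $i$-th coordinate'') suffers from the same unguarded step: the coordinate entries $x_i^*(1),\dots,x_i^*(k)$ repeat across the product family, so equating coordinates only forces \emph{grouped} sums of coefficients to vanish, and precisely the relation displayed above survives. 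So the gap in your proposal is genuine and coincides with the weak point of the published argument; the correct value of the order in the ``moreover'' clause is the additive quantity $1+\sum_{i\in\Lambda}(n_i-1)$, and any repair of your induction should target that count, for instance by amalgamating one coordinate at a time and verifying at each step that the new coordinate contributes $n_i-1$ fresh dimensions through its difference space.
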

\begin{proof} Let $(x_{i})$ be a multismooth point of finite order. Suppose if possible $\Lambda$ is an infinite set say, $\Lambda=\{i_{k}:k\in\mathbb{N}\}$. Fix some $(x^*_i)\in S_{(x_i)}$ and $j\in \mathbb{N}$. Define,
    $$
     M^{(j)}_{i}=
     \begin{cases}
        S_{\frac{x_{i}}{\|x_{i}\|}}, & i\in\{i_{1},i_{2},...i_{j}\},\\
        \{x^*_{i}\}, & \text{otherwise}.
     \end{cases}
     $$
     Let $E_{j}=\prod_{i=1}^{\infty} M^{(j)}_{i}$. Now we will show that $\text{span}(E_{j})\subsetneq \text{span}(E_{j+1})$ for all $j\in \mathbb{N}$. Let $z^*_{i_{j+1}}\in S_{\frac{x_{i_{j+1}}}{\|x_{i_{j+1}}\|}}$ be such that $z^*_{i_{j+1}}\neq x^*_{i_{j+1}}$. So we get $z^*_{i_{j+1}}\text{ and } x^*_{i_{j+1}}$ are two independent elements in $S_{\frac{x_{i_{j+1}}}{\|x_{i_{j+1}}\|}}$. Define $y^*_{n}=x^*_{n} \text{ for } n\neq i_{j+1} \text{ and } y^*_{i_{j+1}}=z^*_{i_{j+1}}$. Then it is easy to see that $(y^*_{n})_{n\geq1}\in \text{span}(E_{j+1})$ but $(y^*_{n})_{n\geq1}\notin \text{span}(E_{j})$. Hence $\text{span}(E_{j})\subsetneq\text{span}(E_{j+1})$, consequently $\text{dim}(\text{span}(E_{j+1}))>\text{dim}(\text{span}(E_{j}))$. Similarly, we get $\text{dim}(\text{span}(S_{(x_{i})}))>\text{dim}(\text{span}(E_{j}))$ for all $j\in \mathbb{N}$. From the last two inequalities, we have $\text{dim}(\text{span}(S_{(x_{i})}))>\text{dim}(\text{span}(E_{j}))\geq \text{dim}(\text{span}(E_{1}))+j-1$. Hence $\text{dim}(\text{span}(S_{(x_{i})}))$ is infinite as $j\to \infty$. Thus $(x_i)$ is multismooth of an infinite order, which is a contradiction to our assumption. Hence $\Lambda$ must be a finite set. By using the similar argument as above, we get, $\text{dim}(\text{span}(S_\frac{x_{k}}{\|x_{k}\|}))$ $\leq \text{dim}(\text{span}(S_{(x_{i})}))<\infty$ for each $k \in \mathbb{N}$. Thus $\frac{x_{k}}{\|x_{k}\|}$ is multismooth of finite order for each $k\in \Lambda$. 

Conversely, suppose $\Lambda$ is finite and $\frac{x_{i}}{\|x_{i}\|}$ is multismooth of finite order for each $i\in \Lambda$. By Lemma~\ref{nadd}, $
S_{(x_{i})} = \prod_{i=1}^{\infty} S_{\frac{x_i}{\|x_{i}\|}}$.
By our assumption, we have $S_{\frac{x_{i}}{\|x_{i}\|}}$ is a singleton for $i\in \Lambda^c$ and $\text{dim}(\text{span}(S_\frac{x_{i}}{\|x_{i}\|}))<\infty$ for $i\in \Lambda$. Since $\Lambda$ is finite, we have $\text{dim}(\text{span}(\prod_{i=1}^{\infty} S_{\frac{x_i}{\|x_{i}\|}})<\infty$. This implies $\text{dim}(\text{span}(S_{(x_{i})}))<\infty$. Consequently $(x_i)$ is multismooth of finite order.

Finally, let $\frac{x_{i}}{\|x_{i}\|}$ is multismooth of order $n_{i}$, for each $i\in \Lambda$. For $i\in \Lambda$, let $B_{i}$ denote the basis of span$(S_{\frac{x_{i}}{\|x_{i}\|}})$. Fix some $(x_{i}^*)\in S_{(x_{i})}$ and $j\in \mathbb{N}$ and define 
    \[
C_{j}=
     \begin{cases}
        B_{j}, & j\in \Lambda,\\
        \{x^*_{j}\}, & \text{otherwise}.
     \end{cases}
    \]
    We have that $B=\prod_{j=1}^{\infty}C_{j}\subseteq S_{(x_{i})}$ is a finite set such that card$(B)=\prod_{i\in \Lambda} n_{i}$. Now we show that $B$ is linearly independent. Let $\alpha_{1}(x_{i}^*(1))+ \alpha_{2}(x_{i}^*(2))+\alpha_{3}(x_{i}^*(3))+\dots+\alpha_{k}(x_{i}^*(k))=0$, where $\{\alpha_{i}\}_{1\leq i\leq k}\subseteq \mathbb{C}$ and $(x_{i}^*(n))\in B$ for $1\leq n\leq k$. For $i\in \Lambda$, by equating the $i^{th}$-coordinate in the above equation, we get $\alpha_{1}x_{i}^*(1)+ \alpha_{2}x_{i}^*(2)+\alpha_{3}x_{i}^*(3)+\dots+\alpha_{k}x_{i}^*(k)=0$. Since $x_{i}^*(n)\in B_{i}$ for $1\leq n\leq k$, we have that $\alpha_{i}=0$. This implies $B$ is linearly independent. A simple combinatorial argument shows that $B$ generates the space span$(S_{(x_{i})})$. This completes the proof.
\end{proof}
\section{On weak compactness of state spaces}
In this section, we first discuss the weak compactness of state spaces in a Banach space under certain geometric conditions on it. We also address the weak compactness of state spaces in vector-valued case, that is, in ${\bigoplus_{c_{0}}}_{i\in J}{X_i},\ {\bigoplus_{\ell^1}}_{i\in J}{X_i}$ and $C(\Omega,X)$.

\subsection{Preliminaries}
 Let $X=M\bigoplus_{\ell^{\infty}} N$, $\ell^{\infty}$-direct sum of the closed subspaces $M$ and $N$ of $X$. Then it is straightforward to check that $X^*=M^*\bigoplus_{\ell^1} N^*$, the $\ell^{1}$-direct sum of $M^*$ and $N^*$. For $E\subseteq X$, we denote the convex hull of $E$ by $\text{co}(E)$ and the set of all extreme points of $E$ by $\text{ext}(E)$. Notice that $S_{x}$ is a weak$^*$ compact subset of $X^*_{1}$, therefore weak compactness of $S_{x}$ is equivalent to saying that the weak and weak$^*$ topologies coincide on $S_{x}$. Similarly, norm compactness of $S_{x}$ is equivalent to saying that the norm and weak$^*$ topologies coincide on $S_{x}$. We apply this identification to the following result.

\begin{theorem}\label{2T1}
    Let $X$ be a Banach space such that $X=M\bigoplus_{\ell^{\infty}}N$. Let $x\in S(X)$ be such that $x=m+n$, where $m\in M$, $n\in N$. Then the following statements are true.
    \begin{enumerate}[label=(\alph*)]
    \item Let $\ ||n|| < 1$. Then $S_{x}$ is weakly compact in $X^*$ if and only if $S^{M^*}_{m}$ is weakly compact in $M^*$.
    \item Let $||n||=||m||= 1$. Then $S_{x}$ is weakly compact in $X^*$ if and only if $S^{M^*}_{m}$ is weakly compact in $M^*$ and $S^{N^*}_{n}$ is weakly compact in $N^*$.
    \end{enumerate}
\end{theorem}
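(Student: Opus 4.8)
Since $X = M \oplus_{\ell^\infty} N$, the dual decomposes as $X^* = M^* \oplus_{\ell^1} N^*$, so I will write a general functional as $x^* = (m^*,n^*)$, for which $\|x^*\| = \|m^*\| + \|n^*\|$ and $x^*(x) = m^*(m) + n^*(n)$. The plan is to first compute $S_x$ explicitly in each case and then transfer weak compactness across the canonical isometric embeddings $M^* \hookrightarrow X^*$, $N^* \hookrightarrow X^*$ and the coordinate projections, all of which are bounded linear maps and hence weak-to-weak continuous. In particular, for a closed subspace such as $M^* \times \{0\}$, the relative weak topology inherited from $X^*$ coincides with its own weak topology (every functional on the subspace extends to $X^*$ by Hahn--Banach), so weak compactness of $S^{M^*}_{m}$ in $M^*$ is equivalent to weak compactness of its image $S^{M^*}_{m} \times \{0\}$ in $X^*$; equivalently, the mutually inverse bounded linear maps $a^*\mapsto(a^*,0)$ and $(m^*,n^*)\mapsto m^*$ carry weakly compact sets to weakly compact sets.

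For the explicit description I start from $\|m^*\| + \|n^*\| = 1$ and the estimate $\mathrm{Re}\,(m^*(m)+n^*(n)) \le \|m^*\|\,\|m\| + \|n^*\|\,\|n\|$, then read off the constraints forced by $x^*(x)=1$ using the comparison of real parts exactly as in Lemma~\ref{nadd} and Theorem~\ref{3t1}. In case (a), $\|x\|=1$ and $\|n\|<1$ force $\|m\|=1$, and the estimate gives $1 = \|m^*\|+\|n^*\| \le \|m^*\| + \|n^*\|\,\|n\| \le 1$, which (as $\|n\|<1$) forces $\|n^*\|=0$; hence $n^*=0$, $\|m^*\|=1$ and $m^*(m)=1$, so $S_x = S^{M^*}_{m} \times \{0\}$ and the equivalence is immediate from the topological identification above. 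In case (b), $\|m\|=\|n\|=1$, so equality in the estimate forces $m^*(m)=\|m^*\|$ and $n^*(n)=\|n^*\|$; writing $\lambda=\|m^*\|$, $\mu=\|n^*\|$ (with $\lambda+\mu=1$) this gives $m^*=\lambda a^*$, $n^*=\mu b^*$ with $a^*\in S^{M^*}_{m}$, $b^*\in S^{N^*}_{n}$, whence
\[
S_x = \{\, t(a^*,0)+(1-t)(0,b^*) : t\in[0,1],\ a^*\in S^{M^*}_{m},\ b^*\in S^{N^*}_{n}\,\} = \operatorname{co}\big((S^{M^*}_{m}\times\{0\})\cup(\{0\}\times S^{N^*}_{n})\big).
\]

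For the forward direction of (b) I will use $S^{M^*}_{m}\times\{0\} = S_x \cap (M^*\times\{0\})$: the intersection picks out $t=1$, since $b^*$ has norm $1$ and $S^{N^*}_{n}\neq\emptyset$. As $M^*\times\{0\}$ is a closed, hence weakly closed, subspace, this exhibits $S^{M^*}_{m}\times\{0\}$ as a weakly closed subset of the weakly compact set $S_x$, so it is weakly compact, and symmetrically for $S^{N^*}_{n}$. For the converse I will show the convex hull above is weakly compact when both pieces are: the product $[0,1]\times S^{M^*}_{m}\times S^{N^*}_{n}$ is compact in the product of the weak topologies, and the convex-combination map $(t,a^*,b^*)\mapsto t(a^*,0)+(1-t)(0,b^*)$ is weak-to-weak continuous (for each $\phi\in X^{**}$ the value $t\,\phi(a^*,0)+(1-t)\,\phi(0,b^*)$ is continuous in the net), so $S_x$ is its continuous image and therefore weakly compact.

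The main obstacle is the converse of (b): passing from weak compactness of the two pieces to weak compactness of their convex hull. The continuous-image argument above is the clean route; alternatively one may invoke the Krein--\v{S}mulian theorem, noting that $(S^{M^*}_{m}\times\{0\})\cup(\{0\}\times S^{N^*}_{n})$ is weakly compact and that $S_x$ coincides with its (closed) convex hull. The only other point needing care, used throughout, is the identification of the weak topology of the $\ell^1$-summands with the subspace weak topology inherited from $X^*$, which I justify via Hahn--Banach as indicated in the first paragraph.
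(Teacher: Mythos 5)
Your proposal is correct and takes essentially the same route as the paper: decompose $X^*=M^*\bigoplus_{\ell^1}N^*$, show $S_x=S^{M^*}_m$ in case (a) and $S_x=\operatorname{co}\bigl(S^{M^*}_m\cup S^{N^*}_n\bigr)$ in case (b), then transfer weak compactness (weakly closed subsets of $S_x$ for the forward direction, weak compactness of the convex hull for the converse). The only cosmetic differences are that you prove the convex-hull step directly via Tychonoff and a weak-to-weak continuous image argument where the paper cites \cite[Theorem~3.20.$(a)$]{RW}, and you make explicit the identification of the weak topology of $M^*$ with the relative weak topology of $M^*\times\{0\}$ in $X^*$, which the paper leaves implicit.
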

\begin{proof} $(a)$ Let $\|n\|<1$. Then $\|m\|=1$. For $x^* \in S_{x}$, we have $x^* = m^* + n^*$ with $\|m^*\| + \|n^*\| = 1$ and  $x^*(x) = m^*(m) + n^*(n) = 1$, where $m^*\in M^*$, $n^*\in N^*$. Clearly, $m^*$ is non-zero. Now we claim that $n^*=0$. If not, then we can write $\|m^*\|\frac{ m^*(m)}{\|m^*\|} + \|n^*\| \frac{n^*(n)}{\|n^*\|} = 1$. Thus $\frac{m^*(m)}{\|m^*\|} = 1 \quad \text{and} \quad \frac{n^*(n)}{\|n^*\|} = 1$, which is a contradiction to $\|n\| < 1$. Thus we have $n^* = 0$. Hence $m^*\in S^{M^*}_m$. Consequently $S_{x} = S^{M^*}_{m}$. Thus $S_{x}$ is weakly compact in $X^*$ if and only if $S^{M^*}_{m}$ is weakly compact in $M^*$.  
    
$(b)$ Let $\|m\| = \|n\| = 1$. For $x^*=m^*+n^* \in S_{x}$, $m^*\in M^*$ and $n^*\in N^*$. Suppose $m^*=0$, then $x^*\in S_n^{N^*}$. Similarly, if $n^*=0$, then $x^*\in S_{m}^{M^*}$. Assuming $m^*\neq 0$ and $n^*\neq 0$, we write $x^* = \|m^*\| \left( \frac{m^*}{\|m^*\|} \right) + \|n^*\| \left( \frac{n^*}{\|n^*\|} \right)$. From the proof of part $(a)$, we have $\frac{m^*}{\|m^*\|} \in S^{M^*}_{m} \quad \text{and} \quad \frac{n^*}{\|n^*\|} \in S^{N^*}_{n}$. Thus $S_{x} \subseteq \operatorname{co} ( S^{M^*}_{m}  \cup  S^{N^*}_{n})$. The other inclusion is direct. Hence $S_{x} = \operatorname{co} ( S^{M^*}_{m}  \cup S^{N^*}_{n})$.
Now if $S_{x}$ is weakly compact, then we know that weak and weak$^*$ topologies coincide on $S_{x}$. Since $S^{M^*}_{m}$ and $S^{N^*}_{n}$ are weakly closed subsets of $S_{x}$, we have that $S^{M^*}_{m}$ and $S^{N^*}_{n}$ are weakly compact in $M^*$ and $N^*$, respectively. Conversely, suppose $S^{M^*}_{m}$ and $S^{N^*}_{n}$ are weakly compact. Then  by \cite[Theorem~3.20.$(a)$]{RW}, convex hull of finite union of weakly compact convex sets is weakly compact. So $S_{x}$ is weakly compact. This completes the proof.
\end{proof}

The subspace $M$ in Theorem~\ref{2T1} is known as an $M$-summand (see \cite[Definition~I.1.1]{HWW}). Let $Y$ be a closed subspace of a Banach space $X$. We denote by $Y^\#$, the set $\{x^*\in X^*:\|x^*\|=\|x^*|_Y\|\}$, where $\|x^*|_Y\|=\sup_{y\in S(Y)}|x^*(y)|$. We say that $Y$ has property-$(SU)$ if $Y^\#$ is linear. In this case, we have $X^*=Y^\#\oplus Y^\perp$ and if $x^*=y^{\#}+y^\perp$, $y^\perp\in Y^\perp$, $y^\#\in Y^{\#}$, then $\|x^*\|>\|y^{\#}\|$ whenever $y^\perp \neq 0$ (see \cite{O}). One can check that if $Y$ is a $M$-summand in $X$, then $Y^\#$ is linear. The linearity of $Y^\#$ implies that the canonical restriction map from $Y^{\#}$ to $Y^*$ is an isometry. For a recent work on property-$(SU)$ see \cite{DPR}. The following theorem is more general than part $(a)$ of Theorem~\ref{2T1}.
\begin{theorem}\label{2T2}
    Let $Y$ has the property-$(SU)$ in $X$ and $y\in S(Y)$. Then $S_y$ is weakly compact in $X^*$ if and only if $S^{Y^*}_y$ is weakly compact in $Y^*$.
\end{theorem}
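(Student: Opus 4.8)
The plan is to reduce everything to the observation that, under property-$(SU)$, every state functional of $y$ actually lives in the summand $Y^\#$, and that on $Y^\#$ the restriction map is an isometric copy of $Y^*$. First I would record the structural facts attached to property-$(SU)$: the decomposition $X^*=Y^\#\oplus Y^\perp$, the strict inequality $\|x^*\|>\|y^\#\|$ whenever the $Y^\perp$-component $y^\perp$ is non-zero, and that the restriction map $R\colon Y^\#\to Y^*$, $R(x^*)=x^*|_Y$, is an isometry. I would first note that $R$ is in fact an isometric isomorphism: it is injective (being an isometry) and surjective (by Hahn--Banach every $y^*\in Y^*$ has a norm-preserving extension, which by definition lies in $Y^\#$).

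The central step---and the only place the geometry of $Y$ is used---is to show $S_y\subseteq Y^\#$. Given $x^*\in S_y$, write $x^*=y^\#+y^\perp$ according to the decomposition. Since $y\in Y$ we have $y^\perp(y)=0$, so $y^\#(y)=x^*(y)=1$; because $R$ is an isometry and $R(y^\#)(y)=1$ with $\|y\|=1$, this forces $\|y^\#\|=\|R(y^\#)\|\ge 1$. On the other hand $\|y^\#\|\le\|x^*\|=1$, and the strict inequality of property-$(SU)$ gives $\|x^*\|>\|y^\#\|$ as soon as $y^\perp\ne 0$. These are incompatible unless $y^\perp=0$, so $x^*=y^\#\in Y^\#$. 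A short computation then shows that $R$ carries $S_y$ bijectively onto $S^{Y^*}_y$: for $x^*\in S_y\subseteq Y^\#$ the restriction $x^*|_Y$ has norm $1$ and sends $y$ to $1$, while conversely $R^{-1}(y^*)$ for $y^*\in S^{Y^*}_y$ is a norm-one functional on $X$ sending $y$ to $1$, hence lies in $S_y$.

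Finally I would transfer weak compactness through this identification, using only that bounded linear operators are weak-to-weak continuous. The restriction map is the adjoint of the inclusion $\iota\colon Y\hookrightarrow X$, hence weak-to-weak continuous from $X^*$ to $Y^*$; if $S_y$ is weakly compact in $X^*$, its image $R(S_y)=S^{Y^*}_y$ is weakly compact in $Y^*$. Conversely $R^{-1}\colon Y^*\to Y^\#\subseteq X^*$ is a bounded (indeed isometric) operator, hence also weak-to-weak continuous into $X^*$, so if $S^{Y^*}_y$ is weakly compact then $R^{-1}(S^{Y^*}_y)=S_y$ is weakly compact in $X^*$. This closes both implications. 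The main obstacle is the containment $S_y\subseteq Y^\#$; once that is established the topological part is formal, resting only on the elementary fact that bounded operators preserve weak compactness.
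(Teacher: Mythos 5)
Your proof is correct and follows essentially the same route as the paper's: the central step in both is that the strict inequality $\|x^*\|>\|y^\#\|$ for $y^\perp\neq 0$ forces every $x^*\in S_y$ to lie in $Y^\#$, after which the transfer of weak compactness is formal. The only difference is presentational --- the paper silently identifies $Y^\#$ with $Y^*$ via the restriction isometry and invokes the subspace weak topology, whereas you keep the map $R$ explicit and transfer compactness through weak-to-weak continuity of $R$ and $R^{-1}$, which is a slightly more careful rendering of the same argument.
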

\begin{proof}
  Suppose $Y$ has the property-$(SU)$ in $X$.  Then we have $X^*=Y^*\oplus Y^{\perp}$. Let $x^*=y^*+y^{\perp}\in S_y$, where $y^*\in Y^*$, $y^{\perp}\in Y^{\perp}$. Since $x^*\in S^{}_y$, we have $y^*(y)=1$, so $1\leq\|y^*\|$. Suppose if possible $y^{\perp}\neq 0$. Then we have $1\leq\|y^*\|<\|x^*\|$, which is a contradiction to $\|x^*\|=1$. So we have $y^{\perp}=0$ and $y^*\in S^{Y^*}_y$. Clearly, $S^{Y^*}_y=S_{y}$. Since $Y^*$ is a subspace of $X^*$, we have $S_{y}$ is weakly compact in $X^*$ if and only if $S^{Y^*}_{y}$ is weakly compact in $Y^*$.
\end{proof}
\begin{remark}
If a closed subspace $M\subseteq X$ is such that $M^{\#}$ is linear and $X^*=M^{\#}\bigoplus_{\ell^1}M^{\perp}$, then $M$ coincides with a class of Banach spaces called $M$-ideals studied in \cite{HWW}. Hence our result also applies for $M$-ideals.
\end{remark}

For the case of $L$-summand (see \cite{HWW} for definition),  we recall the following from \cite{R1}.

\begin{proposition}\label{2P4}\cite[Proposition~4]{R1}
    Let $X=Y\bigoplus_{\ell^1} Z$ and $y\in S(Y)$. Then $S_y$ is weakly compact in $X^*$ if and only if $S^{Y^*}_y$ is weakly compact in $Y^*$ and $Z$ is reflexive.
\end{proposition}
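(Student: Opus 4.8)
The plan is to mirror the structure of the proof of Theorem~\ref{2T2}, decomposing an arbitrary state into its two components and then identifying exactly what the weak compactness of $S_y$ forces on each factor. Since $X=Y\bigoplus_{\ell^1} Z$, we have $X^*=Y^*\bigoplus_{\ell^\infty} Z^*$, with $\|x^*\|=\max\{\|y^*\|,\|z^*\|\}$ for $x^*=y^*+z^*$. The first step is to compute $S_y$ explicitly. If $x^*=y^*+z^*\in S_y$, then $y^*(y)=1$, forcing $\|y^*\|=1$ and hence $y^*\in S^{Y^*}_y$; since $\|x^*\|=\max\{\|y^*\|,\|z^*\|\}=1$, the component $z^*$ may be \emph{any} element of $Z^*_1$. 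Thus I expect to obtain the clean product description
\[
S_y = S^{Y^*}_y \times Z^*_1,
\]
which is the structural analogue of the identity $S_y=S^{M^*}_m\times Z^*_1$ used in Lemma~\ref{2l1}. This is the key computation and it is routine.

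Given this product form, the proof splits into the two implications. For the forward direction, suppose $S_y$ is weakly compact in $X^*$. Both factors embed as weakly closed subsets of $S_y$ (fix the other coordinate at an admissible value), so each is weakly compact: $S^{Y^*}_y$ is weakly compact in $Y^*$, and $Z^*_1$ is weakly compact in $Z^*$. But a Banach space whose dual unit ball $Z^*_1$ is weakly compact is reflexive (the weak topology on $Z^*_1$ coincides with the weak$^*$ topology precisely when $Z$ is reflexive; weak compactness of $Z^*_1$ gives weak$^*$ density of $Z$ in $Z^{**}$ combined with weak closedness, forcing $Z=Z^{**}$). Hence $Z$ is reflexive. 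Conversely, if $S^{Y^*}_y$ is weakly compact and $Z$ is reflexive, then $Z^*_1$ is weakly compact (reflexivity of $Z$ gives reflexivity of $Z^*$, so its unit ball is weakly compact), and the product of two weakly compact sets is weakly compact in the product topology; since the weak topology on the $\ell^\infty$-sum $Y^*\bigoplus_{\ell^\infty}Z^*$ restricts to the product of the weak topologies on the slices carved out by the product set, $S_y=S^{Y^*}_y\times Z^*_1$ is weakly compact.

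The main obstacle I anticipate is the careful handling of the topology on the product: I must verify that the weak topology on $X^*=Y^*\bigoplus_{\ell^\infty}Z^*$, when restricted to a set of the genuine product form $A\times B$, agrees with the product of the weak topologies on $A\subseteq Y^*$ and $B\subseteq Z^*$. This follows because $X=Y\bigoplus_{\ell^1}Z$ gives $X^{**}=Y^{**}\bigoplus_{\ell^\infty}Z^{**}$, and a functional in $X^{**}$ acts coordinatewise, so weak convergence of a net $(y^*_\alpha+z^*_\alpha)$ in $X^*$ is equivalent to the simultaneous weak convergence of $(y^*_\alpha)$ in $Y^*$ and $(z^*_\alpha)$ in $Z^*$. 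Once this coordinatewise description of the weak topology is in hand, both implications reduce to the elementary facts about products of (weakly) compact sets and the reflexivity criterion for $Z$, and the proof concludes quickly.
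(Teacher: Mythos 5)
Your proposal cannot be checked against an internal proof, because the paper does not prove Proposition~\ref{2P4}: it is quoted without proof from the preprint \cite{R1} (and the paper only remarks afterwards that similar arguments give Proposition~\ref{2P5}). Judged on its own, your argument is correct and is the natural one; indeed its key identity $S_y=S^{Y^*}_y\times Z_1^*$ is exactly the identity the paper itself records (without proof) in Lemma~\ref{2l1}, and the rest — weak closedness of each slice $S^{Y^*}_y\times\{0\}$ and $\{y_0^*\}\times Z_1^*$ inside the weakly compact $S_y$ for the forward direction, and the coordinatewise description of the weak topology of $X^*$ plus Tychonoff for the converse — is sound. Two small repairs are needed. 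First, the bidual identification should read $X^{**}=(Y^*\bigoplus_{\ell^\infty}Z^*)^*=Y^{**}\bigoplus_{\ell^1}Z^{**}$, not $Y^{**}\bigoplus_{\ell^\infty}Z^{**}$; this is harmless for your argument, since all you use is that elements of $X^{**}$ act coordinatewise, which holds for any finite direct sum, but the norm label is wrong as written. Second, your parenthetical justification that weak compactness of $Z_1^*$ forces reflexivity of $Z$ is garbled (``weak$^*$ density of $Z$ in $Z^{**}$ combined with weak closedness'' is not the argument); the clean route is: the unit ball of a Banach space is weakly compact if and only if the space is reflexive, so $Z_1^*$ weakly compact gives $Z^*$ reflexive, and a space is reflexive if and only if its dual is, so $Z$ is reflexive. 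With these repairs your proof is complete and self-contained, which is arguably a gain over the paper, where the reader must consult an unpublished preprint.
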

Arguments similar to the ones given in the proof of \cite[Proposition~4]{R1} also lead to the following proposition.
\begin{proposition}\label{2P5}
    Let $X=Y\bigoplus_{\ell^1} Z$ and $y\in S(Y)$. Then $S_y$ is norm  compact in $X^*$ if and only if $S^{Y^*}_y$ is norm compact in $Y^*$ and $Z$ is a finite-dimensional space.
\end{proposition}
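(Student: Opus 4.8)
The plan is to follow the template of Proposition~\ref{2P4}, simply swapping the reflexivity criterion (weak compactness of the dual unit ball) for the F.~Riesz criterion (norm compactness of the dual unit ball). Since $X=Y\bigoplus_{\ell^1}Z$, the dual splits as $X^*=Y^*\bigoplus_{\ell^\infty}Z^*$, so that every $x^*\in X^*$ is uniquely $x^*=y^*+z^*$ with $\|x^*\|=\max\{\|y^*\|,\|z^*\|\}$. As already recorded in the proof of Lemma~\ref{2l1}, for $y\in S(Y)$ one has $S_y=S^{Y^*}_y\times Z_1^*$ inside $Y^*\bigoplus_{\ell^\infty}Z^*$, where $Z_1^*$ is the \emph{entire} closed unit ball of $Z^*$; this identification is the engine of the whole argument.

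First I would observe that, because the norm on the $\ell^\infty$-sum is the maximum of the two coordinate norms, the norm topology of $X^*$ restricts on the product set $S^{Y^*}_y\times Z_1^*$ to the product of the respective norm topologies. Consequently $S_y$ is norm compact if and only if both factors are norm compact. For the forward implication I would invoke that the coordinate projections $x^*\mapsto y^*$ and $x^*\mapsto z^*$ are $1$-Lipschitz, hence norm continuous, and map $S_y$ onto $S^{Y^*}_y$ and onto $Z_1^*$ respectively (surjectivity holds because both factors are nonempty, the former by Hahn--Banach); thus norm compactness of $S_y$ is inherited by each factor. The converse is the trivial direction, since a finite product of norm-compact sets is norm compact in the product topology.

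It then remains to rephrase the condition that $Z_1^*$ is norm compact. By the theorem of F.~Riesz, the closed unit ball of the normed space $Z^*$ is norm compact precisely when $Z^*$ is finite-dimensional, which is equivalent to $Z$ being finite-dimensional. Combining the two preceding paragraphs yields exactly the desired equivalence. I do not expect a genuine obstacle here: the content is entirely contained in the identity $S_y=S^{Y^*}_y\times Z_1^*$ of Lemma~\ref{2l1} together with the correct compactness-versus-dimension dictionary, and the only point requiring care is to note that $Z_1^*$ is the \emph{full} unit ball, so that its norm compactness detects the finite-dimensionality of $Z$, exactly as the weak compactness of $Z_1^*$ detects reflexivity in the proof of Proposition~\ref{2P4}.
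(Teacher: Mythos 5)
Your proof is correct and takes essentially the approach the paper intends: the paper gives no explicit argument for Proposition~\ref{2P5}, saying only that it follows by arguments similar to those for Proposition~\ref{2P4}, i.e., via the identification $S_y=S^{Y^*}_y\times Z_1^*$ from Lemma~\ref{2l1}, with the weak-compactness/reflexivity dictionary replaced by the norm-compactness/finite-dimensionality one (Riesz). Your write-up fills in exactly that template, including the key point that the $\ell^\infty$-norm on $X^*=Y^*\bigoplus_{\ell^\infty}Z^*$ induces the product of the norm topologies, so there is no gap.
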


\subsection{State Spaces of vector-valued functions}\label{Sec32}
 Let $(\Omega,\Sigma,\mu)$ be a measure space. Then for a Borel subset $A\subseteq\Omega$ with $\mu(A)>0$, we define $\Sigma_{A}=\{B\cap A:B\in \Sigma\}$ and $\mu^{'}=\mu|_{{\Sigma}_{A}}$. We use $L^{1}(A,,\mu^{'},X)$ to represent the space of Bochner integrable functions restricted to $A$. We denote by $L^{1}(A^{c},\mu^{''},X)$ the corresponding objects when $\mu(A^{c})>0$. For $f\in L^{1}(\mu,X)$, define $|f|:\Omega\to \mathbb{R}$ by $|f|(\omega)=\|f(\omega)\|$. It is easy to see that $|f|\in L^1(\mu)$.
 \begin{example}\cite[p.~170]{Ho}\label{52}
   Let $\mu$ be the Lebesgue measure on $[0,1]$ and $f\in S(L^1(\mu))$. Then $f$ is a smooth point in $L^1(\mu)$ if and only if $\mu(Z)=0$, where $Z=\{t\in [0,1]: f(t)=0\}$.
 \end{example}
\begin{proposition}\label{th39}
    Let $X$ be a Banach space and let $\mu$ be the Lebesgue measure on $[0,1]$. If $f\in S(L^1([0,1],X))$ is such that $S_f$ is either weakly compact or norm compact, then $|f|$ is smooth in $L^1(\mu)$.
\end{proposition}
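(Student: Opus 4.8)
The plan is to prove the contrapositive and route the whole argument through the $L$-summand results Proposition~\ref{2P4} and Proposition~\ref{2P5}. Set $Z=\{t\in[0,1]:f(t)=0\}$. Since $|f|(t)=\|f(t)\|$, we have $\{t:|f|(t)=0\}=Z$ and $\||f|\|=\int_{[0,1]}\|f\|\,d\mu=\|f\|=1$, so $|f|\in S(L^1(\mu))$. By Example~\ref{52}, $|f|$ is smooth in $L^1(\mu)$ exactly when $\mu(Z)=0$; hence it suffices to show that if $\mu(Z)>0$ then $S_f$ is neither weakly nor norm compact. Note also that $\mu(Z^c)>0$ automatically, since $f=0$ a.e. would contradict $\|f\|=1$.

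First I would record the natural $\ell^1$-splitting of $L^1$ across the partition $\{Z,Z^c\}$: the restriction map $g\mapsto (g|_{Z^c},\,g|_{Z})$ is an isometric isomorphism
\[
L^1([0,1],X)\cong L^1(Z^c,X)\bigoplus_{\ell^1} L^1(Z,X),
\]
because the $L^1$-norm is additive over disjoint measurable sets. Under this identification $f$ corresponds to $(f|_{Z^c},0)$, and since $\|f|_{Z^c}\|=\int_{Z^c}\|f\|\,d\mu=1$, the function $f$ is identified with an element of $S(L^1(Z^c,X))$ lying in the first $\ell^1$-summand. Consequently $S_f$, computed in the full dual, coincides with the state space $S_y$ of Proposition~\ref{2P4} and Proposition~\ref{2P5}, taking $Y=L^1(Z^c,X)$ and the complementary summand to be $L^1(Z,X)$.

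Next I would observe that the complementary summand $L^1(Z,X)$ is neither reflexive nor finite-dimensional. Since $\mu$ is non-atomic and $\mu(Z)>0$, the measure $\mu|_Z$ admits a countable partition of $Z$ into sets of positive measure; normalising their indicator functions and tensoring with a fixed unit vector $x_0\in X$ produces an isometric copy of $\ell^1$ inside $L^1(Z,X)$. As $\ell^1$ is infinite-dimensional and non-reflexive, and a closed subspace of a reflexive (respectively finite-dimensional) space would itself be reflexive (respectively finite-dimensional), the space $L^1(Z,X)$ fails to be reflexive and fails to be finite-dimensional.

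Finally, applying Proposition~\ref{2P4} with the non-reflexivity of $L^1(Z,X)$ shows that $S_f$ is not weakly compact, while applying Proposition~\ref{2P5} with the infinite-dimensionality of $L^1(Z,X)$ shows that $S_f$ is not norm compact; both conclusions contradict the hypothesis, forcing $\mu(Z)=0$ and hence the smoothness of $|f|$ by Example~\ref{52}. The only mildly delicate point is the clean identification of $S_f$ with the state space $S_y$ appearing in Proposition~\ref{2P4}/\ref{2P5} under the $\ell^1$-decomposition; everything else is routine. I would stress that this argument never invokes the separability of $X^*$ used earlier, since it passes entirely through the $L$-summand structure rather than through the explicit dual $L^\infty(\mu,X^*)$.
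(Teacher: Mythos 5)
Your proof is correct and follows essentially the same route as the paper's: both decompose $L^1([0,1],X)$ as the $\ell^1$-direct sum $L^1(Z^c,X)\bigoplus_{\ell^1}L^1(Z,X)$, place $f$ in the summand over $Z^c$, and invoke Propositions~\ref{2P4} and~\ref{2P5} together with the non-atomicity of $\mu$ to conclude that $L^1(Z,X)$ is neither reflexive nor finite-dimensional, so $S_f$ cannot be weakly or norm compact when $\mu(Z)>0$. Your only addition is the explicit isometric copy of $\ell^1$ inside $L^1(Z,X)$, which spells out a step the paper states without justification.
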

\begin{proof}
    Let $Z=\{t\in [0,1]: f(t)=0 \}=\{t\in [0,1]: \|f(t)\|=0 \}$. Suppose $|f| $ is not smooth. From Example \ref{52}, we have $1>\mu(Z)>0$. Now we show that $S_{f}$ is neither norm compact nor weakly compact. We can write $L^1([0,1],\mu,X)=L^1(Z,\mu^{'},X)\bigoplus_{\ell^1} L^1(Z^{c},\mu^{''},X)$. Clearly, $f\in L^1(Z^{c},\mu^{''},X)$. Since $\mu$ is a non-atomic measure, we have that $L^1(Z,\mu^{'},X)$ is not a reflexive space. In particular, it is not finite-dimensional. Hence $S_{f}$ is neither norm compact nor weakly compact (see Proposition~\ref{2P4} and \ref{2P5}).  
\end{proof}

For a compact Hausdorff space $\Omega$, we recall that $C(\Omega,X)$ denote the space of all $X$-valued continuous functions on $\Omega$, equipped with the supremum norm. In the case, when $X=\mathbb{R \textnormal{ or }C}$, we simply write $C(\Omega)$. A fundamental result states that the dual can be described using Radon measures when \( X = \mathbb{R}\text{ or }\mathbb{C} \), i.e., $C(\Omega)^* \cong M(\Omega)$,
where \( M(\Omega) \) is the space of Radon measures on \( \Omega \). For general Banach spaces \( X \),  we have the identification $C(\Omega, X)^* \cong M(\Omega, X^*)$,
where \( M(\Omega, X^*) \) consists of \( X^* \)-valued Radon measures, equipped with the total variation  norm (see \cite[Theorem~1.7.1]{CP}).
A basic example of elements in \( C(\Omega, X)^* \) are elementary tensors. Given \( \mu \in M(\Omega) \) and \(x^* \in X^*\), an elementary tensor is given by
$
{(\mu\otimes x^*)}(f) = \int_\Omega x^*(f(\omega)) \, d\mu(\omega),
$
for all \( f \in C(\Omega, X) \). In particular, if $\omega\in\Omega$ and $x^*\in X^*$, then $(\delta_{\omega}\otimes x^*)(f)=x^*(f(\omega))$, where $\delta_{\omega}$ is the dirac measure concentrated at the point $\omega$. For $x^{**}\in X^{**}$ and a Borel set $A\subseteq \Omega$, let $(x^{**}\otimes\chi_{A})(\mu):=x^{**}(\mu(A))$. It is easy to check that $x^{**}\otimes\chi_{A}\in C(\Omega,X)^{**}$. In the next theorem, we characterize the weak compactness of state spaces in $C(\Omega, X)^*$ when $X$ satisfies $\overline{\textnormal{ext}}^{w^*}{(X_{1}^*)}\subseteq S(X^*)$. Throughout, we use $\Gamma$ to denote the unit sphere in the complex plane. We recall from \cite[Theorem~8.4]{C}, that ext$(C(\Omega)_{1}^*)=\{t\delta_{\omega}:t\in \Gamma \textnormal{ and }\omega\in \Omega\}$. Before presenting the theorem, consider the following proposition.
\begin{proposition}\label{P38}
   Let $A\subseteq C(\Omega)$ be a closed subspace such that $1\in A$. Then every extreme point of $A_{1}^*$ is the restriction to $A$ of an extreme point of $C(\Omega)_{1}^*$. Moreover, we have $\overline{\textnormal{ext}}^{w^*}{(A_{1}^*)}\subseteq S(A^*)$.
\end{proposition}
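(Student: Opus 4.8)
The plan is to prove both assertions by exploiting the rich supply of extreme points of $C(\Omega)_1^*$ coming from Choquet theory, together with the hypothesis $1\in A$. For the first claim, I would start with an extreme point $\varphi\in\text{ext}(A_1^*)$. By Hahn--Banach, $\varphi$ extends to some norm-preserving functional on $C(\Omega)$; the set $E_\varphi$ of all such norm-preserving extensions is a nonempty, convex, weak$^*$-compact subset of $C(\Omega)_1^*$. The key step is to observe that the restriction map $r:C(\Omega)^*\to A^*$ is weak$^*$-continuous, linear, and affine, and it carries $E_\varphi$ onto $\{\varphi\}$ (every element of $E_\varphi$ restricts to $\varphi$). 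By the Krein--Milman theorem $E_\varphi$ has an extreme point $\psi$, and I would argue that $\psi$ is in fact an extreme point of the whole ball $C(\Omega)_1^*$: indeed, if $\psi=\tfrac12(\mu_1+\mu_2)$ with $\mu_1,\mu_2\in C(\Omega)_1^*$, then restricting gives $\varphi=\tfrac12(r\mu_1+r\mu_2)$ in $A_1^*$, so by extremality of $\varphi$ we get $r\mu_1=r\mu_2=\varphi$, whence $\mu_1,\mu_2\in E_\varphi$; extremality of $\psi$ within $E_\varphi$ then forces $\mu_1=\mu_2=\psi$. Thus $\psi\in\text{ext}(C(\Omega)_1^*)$ and $\psi|_A=\varphi$, which is exactly the first assertion.

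The main obstacle I anticipate is this promotion of extremality from the face $E_\varphi$ to the full ball, and in particular the verification that the decompositions $\mu_1,\mu_2$ genuinely land back in $E_\varphi$ so that the extremality of $\psi$ in $E_\varphi$ can be applied. The role of the hypothesis $1\in A$ is to guarantee that the norm-preserving extensions of $\varphi$ have norm exactly $1$ and evaluate correctly, so that $E_\varphi$ is a face of $C(\Omega)_1^*$ in the relevant sense; concretely, $\|\varphi\|=1$ and $\varphi(1)$ pins down the ``mass'' of any extension, which is what makes the extension set a well-behaved weak$^*$-compact face. Using the explicit description $\text{ext}(C(\Omega)_1^*)=\{t\delta_\omega:t\in\Gamma,\ \omega\in\Omega\}$ recalled just before the proposition, one sees concretely that each such $\psi$ has the form $t\delta_\omega$.

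For the ``moreover'' part I would argue by taking weak$^*$ limits and restrictions. Let $\nu\in\overline{\text{ext}}^{w^*}(A_1^*)$, so $\nu=w^*\text{-}\lim \varphi_\alpha$ with each $\varphi_\alpha\in\text{ext}(A_1^*)$. By the first part each $\varphi_\alpha$ is the restriction of an extreme point $\psi_\alpha=t_\alpha\delta_{\omega_\alpha}$ of $C(\Omega)_1^*$, each of norm one. The plan is to evaluate at the function $1\in A$: since $\psi_\alpha(1)=t_\alpha$ has modulus one and $\varphi_\alpha(1)=\psi_\alpha(1)$, one controls $|\varphi_\alpha(1)|=1$, and passing to the limit gives $|\nu(1)|=1$, which forces $\|\nu\|\ge 1$; combined with $\|\nu\|\le 1$ from the closed unit ball, this yields $\nu\in S(A^*)$. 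Thus $\overline{\text{ext}}^{w^*}(A_1^*)\subseteq S(A^*)$. The delicate point here is purely the norm-attainment at the distinguished element $1$: because $1\in A$ and $\|1\|=1$, weak$^*$-convergence suffices to transfer the value at $1$ to the limit, and the unimodularity of the extreme points of $C(\Omega)_1^*$ does the rest.
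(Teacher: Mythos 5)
Your proof is correct and follows essentially the same route as the paper: your extension set $E_\varphi$ is exactly the paper's weak$^*$-compact set $K=\{\mu\in C(\Omega)_1^*:\mu|_A=\varphi\}$, your ``promotion of extremality'' via restricting a convex decomposition is precisely the paper's verification that $K$ is a face (plus Krein--Milman), and the \emph{moreover} part is proved identically, by writing each extreme point of $A_1^*$ as the restriction of some $t\delta_\omega$ with $|t|=1$ and evaluating at $1\in A$ before passing to the weak$^*$ limit. The only inaccuracy is in your commentary, not in the argument: the hypothesis $1\in A$ plays no role in the first assertion (the face property of $E_\varphi$ and $\|\varphi\|=1$ follow from extremality of $\varphi$ alone); it is needed only for the ``moreover'' claim.
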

\begin{proof}
 Let $a^*\in A_{1}^*$ is an extreme point. Let $K=\{\mu\in C(\Omega)_{1}^*:\mu|_{A}=a^*\}$. By Hahn-Banach theorem, we have that $K\subseteq C(\Omega)_{1}^*$ is a non-empty weak$^*$ compact convex set. We first show that $K$ is a face of $C(\Omega)_{1}^*$. Let $\mu\in K$ such that $\mu=t\mu_{1}+(1-t)\mu_{2}$ for some $\mu_{1}, \mu_{2}\in C(\Omega)_{1}^*$. Restricting both the sides to $A$, we get that $a^*=t{\mu_{1}}|_{A}+(1-t){\mu_{2}}|_{A}$. Since $a^*\in A_{1}^*$ is an extreme point, we have ${\mu_{1}}|_{A}={\mu_{2}}|_{A}=a^*$. Thus $\mu_{1}, \mu_{2}\in K$. So we get that $K$ is a face of $C(\Omega)_{1}^*$. Since $K$ is a weak$^*$ compact convex subset, so it contains an extreme point of the dual unit ball $C(\Omega)_{1}^*$. Hence we get that $a^*$ is the restriction to $A$ of an extreme point of $C(\Omega)_{1}^*$. Next we observe that $\overline{\textnormal{ext}}^{w^*}(C(\Omega)_{1}^*)\subseteq S(C(\Omega)^*)$. Indeed, let $\{t_{\alpha}\delta_{\omega_\alpha}\}_{\alpha\in J}$ be a net. Since $\Gamma$ and $\Omega$ are both compact, we may assume that $\{t_{\alpha}\}$ and $\{\omega_{\alpha}\}$ are both converging to $t$ and $\omega$, respectively, for some $t\in \Gamma$ and $\omega\in \Omega$. So we get that $t_{\alpha}\delta_{\omega_\alpha}\to t\delta_{\omega}$ in the weak$^*$ topology. This implies that weak$^*$ accumulation points of the set $\textnormal{ext}(C(\Omega)_{1}^{*})$ are of norm one. To see that $\overline{\textnormal{ext}}^{w^*}{(A_{1}^*)}\subseteq S(A^*)$, let $\{a^*_{\alpha}\}_{\alpha\in J}\in \textnormal{ext}(A_{1}^*)$ be a net such that $a_{\alpha}^*\to a_{0}^*$ in the weak$^*$ topology for some $a_{0}^*\in A_{1}^*$. Since $1\in A$, we have $a_{\alpha}^*(1)\to a_{0}^*(1)$. From the above we have that $a_{\alpha}^*=t_{\alpha}\delta_{\alpha}$ for $\{t_{\alpha}\}\subseteq \Gamma$ and $\{\omega_{\alpha}\}\subseteq \Omega$. This implies 
 $
 |a_{\alpha}^*(1)|=|t_{\alpha}|=1.
 $
 So we get that $|a_{0}^*(1)|=1$. Consequently, $\|a_{0}^*\|=1$. This completes the proof.
\end{proof}
The following remark shows that the converse of Proposition~\ref{P38} does not hold in general. For a convergent scalar valued sequence $(x_{i})$, we use the symbol `$x_{\infty}$' to denote its limit.
\begin{remark}
    Let $c$ be the space of all complex valued convergent sequences, equipped with the supremum norm and let $A=\{(x_{i})\in c:x_{\infty}=\frac{x_{1}+x_{2}}{2}\}$. Clearly, $A\subseteq c$ is a closed subspace containing every constant sequence. Now we define $e_{\infty}:c\to \mathbb{C}$ by $e_{\infty}((x_{i}))=x_{\infty}$. It easy to see that $\|e_{\infty}\|=1$ and it is an extreme point of $c_{1}^*$. Since $e_{\infty}|_{A}=\frac{e_{1}+e_{2}}{2}$, where $e_{1}, e_{2}$ are the evaluation functionals at $i=1$ and $i=2$, respectively, so we have $e_{\infty}|_{A}\in A_{1}^*$ is not an extreme point.
\end{remark}
 \begin{theorem}
 Let $f \in S(C(\Omega,X))$ and $ \Lambda = \{ \omega \in \Omega : \|f(\omega)\| = 1 \} $. If $ \Lambda $ is finite and $S_{f(w)}$ is weakly compact for each $w\in \Lambda$, then $ S_f $ is weakly compact. In addition, if we assume $\overline{\textnormal{ext}}^{w^*}{(X_{1}^*)}\subseteq S(X^*)$, then the converse is also true.
 \end{theorem}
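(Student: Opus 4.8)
The plan is to push everything through the extreme points of $S_f$. Recall that $\operatorname{ext}(C(\Omega,X)_1^*)$ consists exactly of the elementary tensors $\delta_\omega\otimes x^*$ with $\omega\in\Omega$ and $x^*\in\operatorname{ext}(X_1^*)$. Since $S_f$ is a face of $C(\Omega,X)_1^*$, we have $\operatorname{ext}(S_f)=S_f\cap\operatorname{ext}(C(\Omega,X)_1^*)$; and $(\delta_\omega\otimes x^*)(f)=x^*(f(\omega))=1$ together with $\|x^*\|=1$ and $\|f(\omega)\|\le1$ forces $\|f(\omega)\|=1$, i.e. $\omega\in\Lambda$, and $x^*\in\operatorname{ext}(S_{f(\omega)})$. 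Thus $\operatorname{ext}(S_f)=\bigcup_{\omega\in\Lambda}\{\delta_\omega\otimes x^*:x^*\in\operatorname{ext}(S_{f(\omega)})\}$. The main technical tool is the evaluation map $E_\omega\colon C(\Omega,X)\to X$, $E_\omega(g)=g(\omega)$: it is linear of norm $\le1$, and its adjoint $E_\omega^*\colon X^*\to C(\Omega,X)^*$ is the isometric embedding $x^*\mapsto\delta_\omega\otimes x^*$. As an adjoint it is weak$^*$-to-weak$^*$ continuous, and being bounded and linear it is weak-to-weak continuous.

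For the forward implication, write $\Lambda=\{\omega_1,\dots,\omega_k\}$ and assume each $S_{f(\omega_j)}$ is weakly compact. Using Krein--Milman on the weak$^*$ compact convex set $S_f$, together with the fact that the convex hull of a finite union of weak$^*$ compact convex sets is weak$^*$ compact, I would first establish $S_f=\operatorname{co}\big(\bigcup_{j=1}^{k}\delta_{\omega_j}\otimes S_{f(\omega_j)}\big)$ (the inclusion $\supseteq$ is immediate, and $\subseteq$ follows because the right-hand side is a weak$^*$ compact convex set containing $\operatorname{ext}(S_f)$, hence contains $\overline{\operatorname{co}}^{w^*}(\operatorname{ext}(S_f))=S_f$). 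Each piece $\delta_{\omega_j}\otimes S_{f(\omega_j)}=E_{\omega_j}^*(S_{f(\omega_j)})$ is weakly compact and convex, being the image of the weakly compact convex set $S_{f(\omega_j)}$ under the weak-to-weak continuous map $E_{\omega_j}^*$. Since the convex hull of a finite union of weakly compact convex sets is weakly compact (exactly as in Theorem~\ref{2T1}, via \cite[Theorem~3.20.$(a)$]{RW}), $S_f$ is weakly compact.

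For the converse assume $S_f$ is weakly compact. As $S_f$ is always weak$^*$ compact and the weak$^*$ topology is coarser than the weak topology, the two coincide on $S_f$. Fixing $\omega\in\Lambda$, the set $\delta_\omega\otimes S_{f(\omega)}=E_\omega^*(S_{f(\omega)})\subseteq S_f$ is weak$^*$ compact, hence weak$^*$ closed in $S_f$, hence weakly closed, hence weakly compact; transporting this back through the isometric isomorphism $E_\omega^*$ onto its closed range (on which the inherited weak topology agrees with the weak topology of $X^*$) shows that $S_{f(\omega)}$ is weakly compact.

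The remaining and decisive point is that $\Lambda$ must be finite, and this is where I expect the real difficulty and where the hypothesis $\overline{\textnormal{ext}}^{w^*}(X_1^*)\subseteq S(X^*)$ enters. Suppose $\Lambda$ is infinite; pick distinct $\omega_n\in\Lambda$ and $x_n^*\in\operatorname{ext}(S_{f(\omega_n)})\subseteq\operatorname{ext}(X_1^*)$, and set $F_n=\delta_{\omega_n}\otimes x_n^*\in S_f$. Because the $\omega_n$ are distinct, the corresponding $X^*$-valued point masses have disjoint supports, so $\|\sum_n a_nF_n\|=\sum_n|a_n|$; that is, $(F_n)$ is isometrically equivalent to the unit vector basis of $\ell^1$, and its closed linear span $W$ is isometric to $\ell^1$. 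The hypothesis controls the weak$^*$ accumulation of the extremal states $F_n$: any weak$^*$ cluster point is of the form $\delta_{\omega_0}\otimes x_0^*$ with $x_0^*\in\overline{\textnormal{ext}}^{w^*}(X_1^*)\subseteq S(X^*)$, hence of norm one, which prevents the extremal states from degenerating to functionals of smaller norm in the weak$^*$ limit. Now invoke weak compactness: by the Eberlein--\v{S}mulian theorem $(F_n)$ has a weakly convergent subsequence $F_{n_k}\to F$, and since $W$ is norm-closed, hence weakly closed, the limit $F$ lies in $W$ and the convergence takes place in the weak topology of $W\cong\ell^1$. By the Schur property of $\ell^1$ this convergence is in norm, contradicting $\|F_{n_k}-F_{n_j}\|=2$ for $k\ne j$. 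Hence $\Lambda$ is finite. The crux of the argument is thus this finiteness step: realizing the extremal states as an isometric $\ell^1$-basis and ruling out weakly convergent subsequences through Schur's property, with the weak$^*$-closure hypothesis guaranteeing that the extremal states do not collapse in the weak$^*$ limit.
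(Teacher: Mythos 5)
Your proof is correct, and most of it coincides with the paper's: the forward direction (Ruess--Stegall extreme points, $S_f=\operatorname{co}\bigl(\bigcup_{\omega\in\Lambda}\delta_\omega\otimes S_{f(\omega)}\bigr)$, and weak compactness of the convex hull of finitely many weakly compact convex sets via \cite[Theorem~3.20.$(a)$]{RW}) and the ``$S_{f(\omega)}$ weakly compact'' half of the converse (coincidence of the weak and weak$^*$ topologies on $S_f$, transported through the embedding $x^*\mapsto\delta_\omega\otimes x^*$) are the paper's arguments in only slightly different clothing. Where you genuinely diverge is the finiteness of $\Lambda$. The paper proves every point of $\Lambda$ is isolated: for a net of distinct $\omega_\alpha\to\omega$ in $\Lambda$ it extracts a weak$^*$ convergent subnet of $x_\alpha^*\in\operatorname{ext}(S_{f(\omega_\alpha)})$, uses the hypothesis $\overline{\textnormal{ext}}^{w^*}(X_1^*)\subseteq S(X^*)$ to ensure the weak$^*$ limit $x^*$ has norm one, and then tests the (weak, by compactness of $S_f$) convergence $\delta_{\omega_\alpha}\otimes x_\alpha^*\to\delta_\omega\otimes x^*$ against $x^{**}\otimes\chi_{\{\omega\}}$ with $x^{**}(x^*)=1$, reaching $0=1$. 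You instead realize countably many point-mass states $F_n=\delta_{\omega_n}\otimes x_n^*$ as an isometric $\ell^1$-basis and invoke Eberlein--\v{S}mulian plus the Schur property; this is valid (the disjoint-support norm identity, the weak closedness of the span $W$, and the agreement of $\sigma(W,W^*)$ with the restricted weak topology are all sound). But notice a discrepancy in your own accounting: your Schur argument never actually uses the hypothesis $\overline{\textnormal{ext}}^{w^*}(X_1^*)\subseteq S(X^*)$, nor even the extremality of the $x_n^*$ (arbitrary elements of $S_{f(\omega_n)}$ suffice). The sentence claiming the hypothesis ``prevents the extremal states from degenerating'' plays no role in the contradiction you derive. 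This is not a gap --- it means your argument proves a strictly stronger statement than the paper's: the full converse holds for every Banach space $X$, the weak$^*$-closure hypothesis being needed only by the paper's cluster-point method, which collapses when the weak$^*$ limit $x^*$ could be $0$. You should either delete the misleading sentences or, better, state the strengthening explicitly.
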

 \begin{proof} 
Suppose $ \Lambda $ is finite and $S_{f(\omega)}$ is weakly compact for each $\omega\in \Lambda$. If $ \mu \in \text{ext}(S_f)$, then $ \mu = \delta_{\omega}\otimes x^* $ for some $ \omega \in \Omega $ and $ x^* \in \text{ext}(X_{1}^*)$ (see \cite[Theorem~1.1.$(a)$]{RS}). Now $\int_{\Omega} f \ d(\delta_{\omega}\otimes x^*) = 1$ that is, $x^* ( f(\omega)) = 1$. This implies $\|f(\omega)\| = 1$. Hence $ \omega \in \Lambda $ and $x^*\in \text{ext}(S_{f(\omega)})$, so we get $\text{ext} ( S_f) \subseteq \bigcup_{\omega\in\Lambda}\{\delta_{\omega}\otimes \text{ext}(S_{f(\omega)}) \}$. Since the convex hull of a finite union of weakly compact convex sets is weakly compact (see \cite[Theorem~3.20.$(a)$]{RW}), we have that ${\text{co}^{}}(\bigcup_{\omega\in\Lambda}^{}\delta_{\omega}\otimes S_{f(\omega)})$ is weakly compact. Since ${\text{co}^{}}(\bigcup_{\omega\in\Lambda}^{}\delta_{\omega}\otimes S_{f(\omega)})$ is weak$^*$ closed as $\Lambda$ is finite, one can easily check that $S_{f} =\overline{\text{co}}^{w^*}(\text{ext}(S_{f}))\subseteq {\text{co}^{}}(\bigcup_{\omega\in\Lambda}^{}\delta_{\omega}\otimes S_{f(\omega)})\subseteq S_{f}$. So we get $S_{f}={\text{co}^{}}(\bigcup_{\omega\in\Lambda}^{}\delta_{\omega}\otimes S_{f(\omega)})$. Hence $S_{f}$ is weakly compact. 
    
    Now assume $\overline{\textnormal{ext}}^{w^*}{(X_{1}^*)}\subseteq S(X^*)$ and suppose that $ S_f $ is weakly compact. We argue by contradiction to show that $ \Lambda $ is finite. Suppose $ \Lambda $ is infinite. We first show that each $ \omega \in \Lambda $ is an isolated point. For $ \{\omega_\alpha\}_{\alpha\in J}\subseteq\Lambda $, if $ \omega_\alpha \to \omega $ and $ \omega_\alpha $'s are all distinct, then $\delta_{\omega_\alpha} \to \delta_{\omega}$ in the weak$^*$ topology. Choose $x_{\alpha}^*\in \text{ext}(S_{f(\omega_{\alpha})})$. Then $\delta_{\omega_{\alpha}}\otimes x_{\alpha}^*\in S_{f}$ for each $\alpha\in J$. Since $X_{1}^*$  is w$^*$-compact and $\overline{\text{ext}}^{\omega^*}{(X_{1}^*)}\subseteq S(X^*)$, there exists a subnet of $\{x_{\alpha}^*\}_{\alpha \in J}$, again denoted by $\{x_{\alpha}^*\}_{\alpha \in J}$ and $x^*\in S(X^*)$ such that $x_{\alpha}^*\to x^*$ in the weak$^*$ topology. Clearly, $x_{\alpha}^*(g(\omega_{\alpha}))\to x^*(g(\omega))$ for all $g\in C(\Omega,X)$ as $g(\omega_\alpha)\to g(\omega)$ in norm. So we get that $\delta_{\omega_{\alpha}}\otimes x_{\alpha}^*\to \delta_{\omega}\otimes x^*$ in the weak$^*$ topology. Since $S_f$ is weakly compact, we have $\delta_{\omega_{\alpha}}\otimes x_{\alpha}^*\to \delta{_\omega}\otimes x^*$ in the weak topology. Let $x^{**}\in S(X^{**})$ be such that $x^{**}(x^*)=1$ and let $x^{**}\otimes\chi_{\{\omega\}} \in C(\Omega,X)^{**}$. Then we get $x^{**}\otimes\chi_{\{\omega\}}\left( \delta_{\omega_{\alpha}}\otimes x_{\alpha}^*\right) \to x^{**}\otimes\chi_{\{\omega\}}\left( \delta_{\omega}\otimes x^*\right)$. It follows that $x^{**}(x^*)=0$, which is a contradiction. Thus $\Lambda$ is a compact discrete subset of $\Omega$, hence $ \Lambda $ must be finite. Next we show that $S_{f(\omega)}$ is weakly compact for each $\omega\in \Lambda$. Fix $\omega\in \Lambda$ and let $\{x_{\alpha}^*\}_{\alpha\in J}\subseteq S_{f(\omega)}$ be a net and $x^*\in S_{f(\omega)}$ be such that $x_{\alpha}^*\to x^*$ in the weak$^*$ topology. Then we have $\{\delta_{\omega}\otimes x_{\alpha}^*\}_{\alpha\in J}\subseteq S_{f}$ and $\delta_{\omega}\otimes x_{\alpha}^*\to\delta_{\omega}\otimes x^*$ in the weak$^*$ topology as $x_{\alpha}^*(f(\omega))\to x^*(f(\omega))$ for all $f\in C(\Omega,X)$. Since $S_{f}$ is weakly compact, we have $\delta_{\omega}\otimes x_{\alpha}^*\to\delta_{\omega}\otimes x^*$ in the weak topology. For $\phi\in X^{**}$, $\phi\otimes\chi_{\{\omega\}}\in C(\Omega,X)^{**}$, so $\phi\otimes\chi_{\{\omega\}}\left( x_{\alpha}^*\otimes\delta_{\omega}\right) \to \phi\otimes\chi_{\{\omega\}}\left( \delta_{\omega}\otimes x^*\right)$, which implies $\phi(x_{\alpha}^*)\to \phi(x^*)$. Hence $x_{\alpha}^*\to x^*$ in the weak topology. This completes the proof.
 \end{proof}
 By using similar arguments and the fact that norm compactness implies weak compactness, we have the following corollary.
\begin{corollary}
    Let $f \in S(C(\Omega,X))$ and $ \Lambda = \{ \omega \in \Omega : \|f(\omega)\| = 1 \} $. If $ \Lambda $ is finite and $S_{f(w)}$ is norm compact for each $w\in \Lambda$, then $ S_f $ is norm compact. In addition, if we assume $\overline{\textnormal{ext}}^{w^*}{(X_{1}^*)}\subseteq S(X^*)$, then the converse is also true.
\end{corollary}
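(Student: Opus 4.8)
The plan is to reuse the architecture of the preceding theorem, substituting the norm topology for the weak topology and invoking the implication that norm compactness entails weak compactness. I would split into the two implications exactly as there.

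For the forward implication, I would first isolate the fact that the identity $S_f=\operatorname{co}\bigl(\bigcup_{\omega\in\Lambda}\delta_{\omega}\otimes S_{f(\omega)}\bigr)$ obtained in the proof of the preceding theorem relies only on the finiteness of $\Lambda$, the weak$^*$ compactness of each $S_{f(\omega)}$, and the Krein--Milman theorem; it does \emph{not} use weak compactness of $S_f$. Granting this identity, the argument becomes purely topological. The map $x^*\mapsto\delta_{\omega}\otimes x^*$ is an isometry of $X^*$ into $C(\Omega,X)^*$, since $\|\delta_{\omega}\otimes x^*\|=\sup_{f\in S(C(\Omega,X))}|x^*(f(\omega))|=\|x^*\|$, so each $\delta_{\omega}\otimes S_{f(\omega)}$ is norm compact whenever $S_{f(\omega)}$ is. I would then conclude by the fact that the convex hull of a finite union of norm-compact convex sets is norm compact: it is the continuous image of the compact set $\Delta_{|\Lambda|-1}\times\prod_{\omega\in\Lambda}\bigl(\delta_{\omega}\otimes S_{f(\omega)}\bigr)$ under $(t,(\xi_{\omega}))\mapsto\sum_{\omega}t_{\omega}\xi_{\omega}$, hence $S_f$ is norm compact.

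For the converse, assume $\overline{\textnormal{ext}}^{w^*}{(X_{1}^*)}\subseteq S(X^*)$ and that $S_f$ is norm compact. Since norm compactness implies weak compactness, $S_f$ is weakly compact, and the converse half of the preceding theorem applies verbatim to give that $\Lambda$ is finite. It then remains only to upgrade the weak compactness of each $S_{f(\omega)}$ to norm compactness. Fixing $\omega\in\Lambda$, I would note that $\delta_{\omega}\otimes S_{f(\omega)}\subseteq S_f$ and that this set is weak$^*$ compact, being the image of the weak$^*$-compact set $S_{f(\omega)}$ under the weak$^*$--weak$^*$ continuous isometry $x^*\mapsto\delta_{\omega}\otimes x^*$; hence it is norm closed. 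A norm-closed subset of the norm-compact set $S_f$ is norm compact, and pulling back along the same isometry shows $S_{f(\omega)}$ is norm compact.

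The one step that warrants care, rather than a mechanical replacement of ``weak'' by ``norm'', is the compactness of the convex hull of a finite union of norm-compact convex sets: in infinite dimensions the convex hull of even a single compact set need not be compact, so I would make the simplex-parametrization above explicit, which is precisely where the finiteness of $\Lambda$ enters. Every other step is a routine transcription of the corresponding step for weak compactness, together with the observation that $\delta_{\omega}\otimes\,\cdot\,$ is an isometric weak$^*$--weak$^*$ homeomorphism onto its image.
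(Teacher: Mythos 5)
Your proposal is correct and is essentially the paper's own argument: the paper proves this corollary with the one-line remark that it follows ``by similar arguments and the fact that norm compactness implies weak compactness,'' and your write-up fills in exactly those details — the identity $S_f=\operatorname{co}\bigl(\bigcup_{\omega\in\Lambda}\delta_{\omega}\otimes S_{f(\omega)}\bigr)$ depending only on finiteness of $\Lambda$ and weak$^*$ compactness, the isometry $x^*\mapsto\delta_{\omega}\otimes x^*$, and compactness of the convex hull of finitely many norm-compact convex sets (which is \cite[Theorem~3.20.$(a)$]{RW} applied to the norm topology, or your explicit simplex parametrization). Your closed-subset/isometry argument for recovering norm compactness of each $S_{f(\omega)}$ in the converse is a clean and valid transcription of the paper's functional-evaluation argument, so nothing further is needed.
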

\subsection{More on direct sums} Now we will see the weak and norm compactness of state spaces in vector-valued discrete sums. Let $X=\prod_{i\in J}X_{i}$. For $j\in J$ and $x^*\in X_{j}^*$, we denote by $e_{j}x^*$ the element of $\prod_{i\in J}X_{i}^*$ whose $i^{th}$ coordinate is $x^*$ if $i=j$ and $0$ otherwise.
\begin{theorem}
Let $(x_i) \in \bigoplus_{{c_0}{i\in J}}X_{i}$ with $\| (x_i) \| = 1$ and $\Lambda = \{ i \in J : \| x_i \| = 1 \}$. Then $ S_{(x_i)} $ is weakly compact if and only if $ S_{x_i} $ is weakly compact for each $ i \in \Lambda $.
\end{theorem}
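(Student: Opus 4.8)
The plan is to pin down the exact shape of $S_{(x_i)}$ as a convex hull of finitely many ``coordinate'' state spaces, and then transfer weak compactness back and forth through the coordinate embeddings $e_i$ and the coordinate projections. Two elementary facts come first. Since $(x_i)\in\bigoplus_{c_0,i\in J}X_i$ forces $\|x_i\|\to 0$ while $\sup_i\|x_i\|=1$, the set $\Lambda=\{i\in J:\|x_i\|=1\}$ is finite and non-empty. Using the identification $\left(\bigoplus_{c_0,i\in J}X_i\right)^*=\bigoplus_{\ell^1,i\in J}X_i^*$, I would then establish the structural identity
\[
S_{(x_i)}=\operatorname{co}\Big(\bigcup_{i\in\Lambda}e_iS_{x_i}\Big).
\]
The argument is the equality-forcing computation already used in Lemma~\ref{nadd}: for $(x_i^*)\in S_{(x_i)}$ one has $\sum_i\|x_i^*\|=1$ and $\sum_i x_i^*(x_i)=1$, and the chain $1=\operatorname{Re}\sum_i x_i^*(x_i)\le\sum_i|x_i^*(x_i)|\le\sum_i\|x_i^*\|\,\|x_i\|\le\sum_i\|x_i^*\|=1$ must be an equality throughout. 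This forces $\|x_i^*\|(1-\|x_i\|)=0$ for every $i$, so the support of $(x_i^*)$ lies in $\Lambda$, and for $i\in\Lambda$ with $x_i^*\neq0$ it forces $\frac{x_i^*}{\|x_i^*\|}\in S_{x_i}$. Writing $(x_i^*)=\sum_{i\in\Lambda}\|x_i^*\|\,e_i\!\left(\frac{x_i^*}{\|x_i^*\|}\right)$ then exhibits it as a convex combination, and the reverse inclusion is immediate.

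For sufficiency, assume each $S_{x_i}$ with $i\in\Lambda$ is weakly compact. Each $e_i$ is bounded linear, hence weak-to-weak continuous, so every $e_iS_{x_i}$ is a weakly compact convex subset of $\bigoplus_{\ell^1}X_i^*$. As $\Lambda$ is finite, the convex hull of a finite union of weakly compact convex sets is weakly compact by \cite[Theorem~3.20.$(a)$]{RW}; by the identity above this convex hull is exactly $S_{(x_i)}$, which is therefore weakly compact.

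For necessity, assume $S_{(x_i)}$ is weakly compact and fix $i_0\in\Lambda$. Since $\|x_{i_0}\|=1$, a direct check gives $e_{i_0}S_{x_{i_0}}\subseteq S_{(x_i)}$. The embedding $e_{i_0}$ is also weak$^*$-to-weak$^*$ continuous, being the adjoint of the coordinate projection on the predual $\bigoplus_{c_0}X_i$, so $e_{i_0}S_{x_{i_0}}$ is the weak$^*$ continuous image of the weak$^*$ compact set $S_{x_{i_0}}$ and hence weak$^*$ compact. Because $S_{(x_i)}$ is weakly compact, the weak and weak$^*$ topologies agree on it; thus $e_{i_0}S_{x_{i_0}}$ is weakly closed in $S_{(x_i)}$ and therefore weakly compact. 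Finally the coordinate projection $P_{i_0}$ is bounded linear, hence weak-to-weak continuous, and $P_{i_0}(e_{i_0}S_{x_{i_0}})=S_{x_{i_0}}$, so $S_{x_{i_0}}$ is weakly compact in $X_{i_0}^*$.

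I expect the structural identity to be the crux: once $S_{(x_i)}$ is identified with $\operatorname{co}\big(\bigcup_{i\in\Lambda}e_iS_{x_i}\big)$ for the finite set $\Lambda$, both implications reduce to the finite-union weak-compactness result and the coincidence of the weak and weak$^*$ topologies on state spaces. The one point demanding care is the topological bookkeeping in the necessity direction, namely distinguishing the weak from the weak$^*$ continuity of $e_{i_0}$ and invoking weak compactness of $S_{(x_i)}$ precisely to upgrade the weak$^*$ closedness of $e_{i_0}S_{x_{i_0}}$ to weak closedness.
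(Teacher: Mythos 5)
Your proposal is correct and takes essentially the same approach as the paper: finiteness of $\Lambda$, the structural identity $S_{(x_i)}=\operatorname{co}\bigl(\bigcup_{i\in\Lambda}e_iS_{x_i}\bigr)$ obtained by the same equality-forcing computation (the paper cites the argument of Theorem~\ref{2T1}$(b)$ for this), and \cite[Theorem~3.20.$(a)$]{RW} for the sufficiency direction. For necessity the paper pushes a weak$^*$-convergent net through $e_{i_0}$ and tests against bidual functionals $e_{i_0}x^{**}$, while you phrase the identical transfer at the level of sets (weak$^*$-continuity of $e_{i_0}$ as an adjoint, then the weak-to-weak continuous coordinate projection); the two formulations are interchangeable.
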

\begin{proof}
It is apparent that $\Lambda = \{ i \in \mathbb{N} : \| x_i \| = 1 \}$ is finite as $\|(x_i)\|\rightarrow0$. Applying similar arguments to the ones used in the proof of Theorem~\ref{2T1}.$(b)$, we get that $$S_{(x_i)} = \operatorname{co} \left( \bigcup_{i \in \Lambda} e_iS_{x_i} \right),$$ where $e_iS_{x_i}=\{e_{i}x^*:x^*\in S_{x_{i}}\}$.
Let $S_{(x_i)}$ be weakly compact and fix $i\in \Lambda$. Let $\{x_{\alpha}^*\}_{\alpha\in J}\subseteq S_{x_{i}}$ be a net and $x^*\in S_{x_{i}}$ be such that $x_{\alpha}^*\to x^*$ in the weak$^*$ topology. Consider $\{e_{i}x_{\alpha}^*\}_{\alpha\in J}\subseteq S_{(x_{i})}$. Then we have $e_{i}x_{\alpha}^*\to e_{i}x^*$ in the weak$^*$ topology. Since $S_{(x_i)}$ is weakly compact, $e_{i}x_{\alpha}^*\to e_{i}x^*$ in the weak topology. For any $x^{**}\in X_{i}^{**}$, we take $(e_{i}x^{**})\in (\bigoplus_{c_{0}{i\in J}}(X_{i}))^{**}$. Then we have $e_{i}x^{**}(e_{i}x_{\alpha}^*)\to e_{i}x^{**}(e_{i}x^*)$. Consequently $x^{**}(x_{\alpha}^*)\to x^{**}(x^*)$ and hence $x_{\alpha}^*\to x^*$ in the weak topology. This completes the proof.

Conversely, let $ S_{x_i} $ be weakly compact for each $ i \in \Lambda $. Since $ S_{(x_i)} $ is convex hull of a finite union of weakly compact convex sets, we have that $S_{(x_i)}$ is weakly compact by \cite[Theorem~3.20.$(a)$]{RW}.
\end{proof}
As another application of \cite[Theorem~3.20.$(a)$]{RW}, we have the following.
\begin{corollary}
    Let $(x_i) \in \bigoplus_{c_{0}{i\in J}}(X_{i})$ be such that $\| (x_i) \| = 1$ and let $\Lambda = \{ i \in \mathbb{N} : \| x_i \| = 1 \}$. Then $ S_{(x_i)} $ is norm compact if and only if $ S_{x_i} $ is norm compact for each $i\in\Lambda$.
\end{corollary}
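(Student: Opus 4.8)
The plan is to run the norm-topology analogue of the proof of the preceding theorem on $c_0$-sums, exploiting the fact (recorded just before Theorem~\ref{2T1}) that norm compactness of a state space is equivalent to the norm and weak$^*$ topologies agreeing on it, together with the isometric identification $\left(\bigoplus_{c_{0}\,i\in J}(X_{i})\right)^*=\bigoplus_{\ell^{1}\,i\in J}(X_{i}^*)$.

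First I would reuse the two structural facts already produced in the weak-compactness theorem. Since $\|x_i\|\to 0$, the set $\Lambda=\{i:\|x_i\|=1\}$ is finite, and exactly as in Theorem~\ref{2T1}.$(b)$ one obtains
\[
S_{(x_i)}=\operatorname{co}\Big(\bigcup_{i\in\Lambda}e_iS_{x_i}\Big),
\]
where $e_i$ denotes the coordinate embedding $X_i^*\hookrightarrow\bigoplus_{\ell^{1}\,i\in J}(X_i^*)$. The point I would emphasise is that $e_i$ is an isometry and that, for $i\in\Lambda$, one has $e_iS_{x_i}=S_{(x_i)}\cap e_i(X_i^*)$; that is, $e_iS_{x_i}$ is precisely the intersection of $S_{(x_i)}$ with the norm-closed subspace $e_i(X_i^*)$ (if $e_ix^*\in S_{(x_i)}$ then $\|x^*\|=1$ and $x^*(x_i)=1=\|x_i\|$, so $x^*\in S_{x_i}$).

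For the forward direction I would assume $S_{(x_i)}$ is norm compact and fix $i\in\Lambda$. Then $e_iS_{x_i}$ is a norm-closed subset of the norm-compact set $S_{(x_i)}$, hence norm compact; since $e_i$ is an isometric embedding, pulling back yields that $S_{x_i}$ is norm compact. This replaces the net/bidual argument used in the weak-compactness case: because we now work in the metric norm topology, no passage to the bidual is needed.

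For the converse I would assume each $S_{x_i}$, $i\in\Lambda$, is norm compact. Each $e_iS_{x_i}$ is then a norm-compact convex set (isometric image of a compact set), and $\Lambda$ is finite, so $\operatorname{co}(\bigcup_{i\in\Lambda}e_iS_{x_i})=S_{(x_i)}$ should be norm compact. The one genuinely non-formal point, which I expect to be the main obstacle, is the norm analogue of \cite[Theorem~3.20.$(a)$]{RW}: the convex hull of a finite union of norm-compact convex sets is norm compact (this can fail for convex hulls of a single infinite-dimensional compact set, so finiteness of $\Lambda$ is essential). I would justify it by exhibiting $\operatorname{co}(\bigcup_{i\in\Lambda}e_iS_{x_i})$ as the image of the compact product $\Delta_{|\Lambda|-1}\times\prod_{i\in\Lambda}e_iS_{x_i}$ under the continuous map $(\lambda,(y_i))\mapsto\sum_{i\in\Lambda}\lambda_i y_i$; continuity in norm together with compactness of the domain gives norm compactness of the image, completing the proof.
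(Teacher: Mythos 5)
Your proposal is correct and takes essentially the same approach as the paper: the paper obtains this corollary as ``another application'' of \cite[Theorem~3.20.$(a)$]{RW} (convex hulls of finite unions of compact convex sets are compact, valid in any topological vector space, hence in the norm topology) to the same decomposition $S_{(x_i)}=\operatorname{co}\bigl(\bigcup_{i\in\Lambda}e_iS_{x_i}\bigr)$ established in the preceding theorem, which is exactly your converse direction. Your forward direction, identifying $e_iS_{x_i}$ as $S_{(x_i)}\cap e_i(X_i^*)$ and pulling a closed subset of a compact set back through the isometry $e_i$, is simply the natural metric-topology simplification of the net/bidual argument used in the weak-compactness theorem, so the two arguments coincide in substance.
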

 In the next theorem, we will derive the necessary and sufficient condition for the weak compactness of the state spaces in $\ell^1(X)$.
\begin{theorem}\label{3t12}
    Let $ (x_i) \in {\ell^{1}}(X)$ be such that $ \| (x_i) \| = 1 $, and let $ \|x_{i}\|\neq 0$ for all $i\in{\mathbb{N}}$. Then $ S_{(x_i)}$ is weakly compact in $ {{\ell^{\infty}}}(X_{}^*) $ if and only if $ S_{\frac{x_i}{\|x_i\|}} $ is weakly compact in $X^*$ for each $i\in{\mathbb{N}}$ and $\textup{diam}\left(S_{\frac{x_i}{\|x_i\|}}\right) \to 0 $.
\end{theorem}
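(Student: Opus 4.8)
The plan is to reduce everything to the product structure of the state space and then treat the two directions separately, using the Eberlein--\v{S}mulian theorem as the main engine. By Lemma~\ref{nadd}, since $\|x_i\|\neq 0$ for all $i$, we have $S_{(x_i)}=\prod_{i=1}^{\infty}K_i$ inside $\ell^{\infty}(X^*)$, where $K_i:=S_{\frac{x_i}{\|x_i\|}}$. Each $K_i$ is a convex, weak$^*$ compact face of $X_1^*$, so $S_{(x_i)}$ is convex and weak$^*$ compact, hence norm closed and therefore weakly closed. Consequently $S_{(x_i)}$ is weakly compact if and only if it is weakly sequentially compact; moreover, as recalled in the preliminaries, weak compactness of a weak$^*$ compact set is equivalent to the coincidence of the weak and weak$^*$ topologies on it. I will use both reformulations.

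For necessity, suppose $S_{(x_i)}$ is weakly compact. The coordinate projection $\pi_i\colon\ell^{\infty}(X^*)\to X^*$ is a norm-one linear map, hence weak-to-weak continuous, and $\pi_i(\prod_j K_j)=K_i$; thus each $K_i$ is weakly compact as a continuous image of a weakly compact set. The delicate point is $\textup{diam}(K_i)\to 0$, which I expect to be the main obstacle. Arguing by contradiction, if $\textup{diam}(K_{i_k})\geq\varepsilon$ along a subsequence $i_1<i_2<\cdots$, choose $u_k^*,v_k^*\in K_{i_k}$ and unit vectors $z_k\in X$ (after a unimodular rotation) with $\mathrm{Re}\,(u_k^*-v_k^*)(z_k)>\varepsilon/2$. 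Fixing a free ultrafilter $\mathcal U$ on $\mathbb N$, the formula $\Phi(y)=\lim_{k\to\mathcal U}y_{i_k}(z_k)$ defines a norm-one element of $(\ell^{\infty}(X^*))^*$. Taking the ``staircase'' $w_n\in\prod_j K_j$ equal to a fixed background sequence off $\{i_k\}$, to $u_k^*$ on $i_k$ for $k\le n$, and to $v_k^*$ on $i_k$ for $k>n$, one checks $w_n\to w_\infty$ weak$^*$ (the tail estimate uses $\ell^1$-summability of the test vectors), while, since $\{k>n\}\in\mathcal U$, the value $\Phi(w_\infty)-\Phi(w_n)=\lim_{k\to\mathcal U}(u_k^*-v_k^*)(z_k)$ is a fixed number with real part $\geq\varepsilon/2$, so $\Phi(w_n)\not\to\Phi(w_\infty)$. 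Hence the weak and weak$^*$ topologies disagree on $S_{(x_i)}$, contradicting weak compactness.

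For sufficiency, assume each $K_i$ is weakly compact and $r_i:=\textup{diam}(K_i)\to 0$; I will show every sequence $(w^{(n)})_n$ in $\prod_i K_i$ has a weakly convergent subsequence. Since each $K_i$ is weakly (sequentially) compact, a diagonal argument produces a single subsequence along which $w_i^{(n)}\to w_i^{(\infty)}$ weakly in $X^*$ for every $i$, with $w_i^{(\infty)}\in K_i$; set $w^{(\infty)}=(w_i^{(\infty)})$. Writing $P_N$ for the projection onto the first $N$ coordinates, the finite block $P_N w^{(n)}$ lies in the $\ell^{\infty}$-sum of the first $N$ copies of $X^*$, whose dual is the corresponding $\ell^1$-sum of copies of $X^{**}$, so coordinatewise weak convergence gives $P_N w^{(n)}\to P_N w^{(\infty)}$ weakly in $\ell^{\infty}(X^*)$ for each fixed $N$. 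For the tail, since $w_i^{(n)},w_i^{(\infty)}\in K_i$ we have $\|(I-P_N)(w^{(n)}-w^{(\infty)})\|\le\sup_{i>N}r_i=:\rho_N\to 0$ uniformly in $n$. Thus for any $\Phi\in(\ell^{\infty}(X^*))^*$ and $\eta>0$, choosing $N$ with $\|\Phi\|\,\rho_N<\eta/2$ and then $n$ large so that $|\Phi(P_N(w^{(n)}-w^{(\infty)}))|<\eta/2$ yields $|\Phi(w^{(n)}-w^{(\infty)})|<\eta$. Hence $w^{(n)}\to w^{(\infty)}$ weakly, and Eberlein--\v{S}mulian gives weak compactness of $S_{(x_i)}$.

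The crux of the whole argument is the necessity of $\textup{diam}(K_i)\to 0$: coordinatewise data alone cannot detect its failure, and one must exhibit a genuinely non-weak$^*$-continuous functional on $\ell^{\infty}(X^*)$. The ultrafilter (Banach-limit type) functional above is exactly what separates a weak$^*$-convergent staircase from its limit, mirroring the classical non-reflexivity of $\ell^{\infty}$; the uniform tail control afforded by $\textup{diam}(K_i)\to 0$ is precisely what makes this obstruction disappear in the converse direction.
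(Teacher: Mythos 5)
Your proof is correct, and it follows the paper's skeleton---Lemma~\ref{nadd} for the product structure $S_{(x_i)}=\prod_i S_{\frac{x_i}{\|x_i\|}}$, coordinate projections for weak compactness of the factors, and a ``staircase'' sequence to rule out $\textup{diam}\left(S_{\frac{x_i}{\|x_i\|}}\right)\not\to 0$---but both directions are finished with genuinely different tools. For necessity, the paper lets the staircase converge weak$^*$ and invokes Mazur's theorem: if $S_{(x_i)}$ were weakly compact the staircase would converge weakly, so some sequence of convex combinations would converge in norm to the limit, which the uniform $\frac{\epsilon_0}{2}$ separation in the tail coordinates forbids. You instead build an explicit norm-one functional $\Phi(y)=\lim_{k\to\mathcal U}y_{i_k}(z_k)$ from a free ultrafilter and show it separates the staircase from its weak$^*$ limit; this is more constructive (it exhibits the functional witnessing the failure of weak convergence) at the cost of ultrafilter machinery, and your key computation that $\Phi(w_\infty)-\Phi(w_n)$ is a fixed scalar with real part at least $\varepsilon/2$ is sound precisely because $\{k>n\}\in\mathcal U$ for every $n$. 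For sufficiency, the paper works with nets and leans on the identification $(\ell^\infty(X^*))^*=\ell^1(X^{**})\bigoplus_{\ell^1}c_0(X^*)^{\perp}$: the hypothesis $\textup{diam}\to 0$ places the coordinatewise differences in $c_0(X^*)$, killing the singular part, and the $\ell^1(X^{**})$ part is split head/tail. You avoid any description of $(\ell^\infty(X^*))^*$: Eberlein--\v{S}mulian reduces the problem to sequences, a diagonal extraction gives coordinatewise weak limits, the head is handled by the duality of the finite $\ell^\infty$-sum, and the tail is controlled in \emph{norm}, uniformly in $n$, by $\sup_{i>N}\textup{diam}\left(S_{\frac{x_i}{\|x_i\|}}\right)$, so an arbitrary functional can be estimated with no structural information about it. Your route is more self-contained and elementary in this direction (only standard theorems plus $\varepsilon/2$-estimates); the paper's net argument buys, directly and without the sequential reduction, the slightly more transparent conclusion that the weak and weak$^*$ topologies coincide on $S_{(x_i)}$.
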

\begin{proof}
From Proposition~\ref{2p2}, $S_{(x_i)} = \prod_{i=1}^{\infty} S_{\frac{x_i}{\|x_{i}\|}}$. Let $S_{(x_i)}$ be weakly compact and $\Pi_{j}:\prod_{i=1}^{\infty}S_{\frac{x_{i}}{\|x_{i}\|}}\rightarrow S_{\frac{x_{j}}{\|x_{j}\|}}$ be the canonical projection. Since $S_{(x_i)}$ is weakly compact, we get that $S_{\frac{x_i}{\|x_i\|}}$ is weakly compact for each $i\in\mathbb{N}$. Suppose that $\text{diam}\left(S_\frac{x_{i}}{\|x_{i}\|}\right)\not\to0$. There exists $\epsilon_{0}>0$ such that $\text{diam}\left(S_\frac{x_{i}}{\|x_{i}\|}\right)\geq\epsilon_0$ for infinitely many $i\in\mathbb{N}$. By re-enumerating the subscripts, we can  assume that $\text{diam}\left(S_\frac{x_{i}}{\|x_{i}\|}\right)\geq\epsilon_0$ for all $i\in\mathbb{N}$. Let $\{x_{i}^*,y_{i}^*\}\subseteq S_\frac{x_{i}}{\|x_{i}\|}$ be such that $x_{i}^*\neq y_{i}^* \text{ and } \|x_{i}^*-y_{i}^*\|\geq\frac{\epsilon_0}{2}$. Fix $n\in \mathbb{N}$ and define,\\
\[
x^*_{i}(n) =
\begin{cases} 
    x^*_{i}, & \text{if } 1\leq i \leq n, \\
    y^*_{i}, & \text{if } i > n.
\end{cases}
\]
 We write $(x^*(n))=(x^*_{i}(n))_{i\geq1}$. Thus we get a sequence $\{(x^*(n))\}_{n\geq1}\subseteq S_{(x_{i})}$ such that $(x^*_{i}(n))\rightarrow (x^*_{i})$ in the weak$^*$ topology as $n\to \infty$. Let $\Delta_{k}$ be an arbitrary convex combination of the elements of $\{(x^*(n))\}_{n\geq1}$. Since for each $n\in\mathbb{N}$, $x^*_{i}(n)=y_{i}^*$ for $i>n$, we have $\|\Delta_{k}-(x^*_{i})\|_{\infty}\geq\frac{\epsilon_0}{2}$.
Hence $\{(x^*(n))\}_{n\geq1}$ has no weakly convergent subsequence, as there is no sequence of convex combinations of the elements of $\{(x^*(n))\}_{n\geq1}$ converging to $(x^*_{i})$ in the norm. Thus by Mazur's Theorem (see \cite[Theorem~3.13]{RW}), $S_{(x_{i})}$ is not weakly compact.

To prove the converse, let $\{ (x^*(\alpha)) \}_{\alpha\in J}$ be a net in $S_{(x_i)}$. For $\alpha\in J$, $x_{i}^*(\alpha)$ denotes the $i^{th}$-coordinate of $(x^*(\alpha))$. Since $S_{(x_i)}$ is weak$^*$ compact, we may assume $\{ (x^*(\alpha) )\}_{\alpha\in J}$ is weak$^*$ convergent. Let $(x^*)\in S_{(x_i)}$ be such that $(x^*_{}(\alpha))\rightarrow (x^*_{})$ in the weak$^*$ topology. We show that $(x^*_{}(\alpha))\rightarrow (x^*_{})$ in the weak topology. It is known that $(\ell^\infty(X^*))^* = \ell^{1}(X^{**})\bigoplus_{\ell^{1}}c_{0}(X^*)^{\perp}$. Let $\Lambda\in(\ell^\infty(X^*))^*$ be such that $\Lambda = \Lambda' \bigoplus_{\ell^1} \Lambda''$, where $\Lambda'\in \ell^{1}(X^{**})$, $\Lambda''\in c_{0}(X^*)^{\perp}$. It is enough to show that for each $ \epsilon > 0 $,  $ |\Lambda((x^*(\alpha)) - (x^*))| < \epsilon $ for all but finitely many $ \alpha $. Choose $ K \in \mathbb{N} $ such that $\text{diam}\left(S_{\frac{x_i}{\|x_{i}\|}}\right) < \frac{\epsilon}{2C}$ for all $ i > K $, where $ C = \| \Lambda' \|_{1} $. We observe that for each $\alpha\in J, \{(x_{i}^*(\alpha)-x_{i}^*)\}_{i\geq1}\in c_{0}(X^*)$ because $\text{diam}\left(S_{\frac{x_i}{\|x_{i}\|}}\right)\to0$. Thus
\begin{align*}
    |\Lambda((x^*(\alpha)) - (x^*))|&=|\Lambda^{'}((x^*(\alpha)) - (x^*))|\\
&\leq \sum_{i=1}^{K-1} |\Lambda^{'}_i  ( x_i^*(\alpha) - x_i^*)|+ \sum_{i=K}^{\infty} \| \Lambda^{'}_i \| \cdot \| x_i^*(\alpha) - x_i^* \|\\
&\leq \sum_{i=1}^{K-1} | \Lambda^{'}_i ( x_i^*(\alpha) - x_i^*)|+\frac{\epsilon}{2}.
\end{align*}
 Since $S_{\frac{x_i}{\|x_{i}\|}} $ is weakly compact for each $1\leq i\leq K-1$, then $|\Lambda'_{i}(x_{i}^*(\alpha) -x_{i}^*)|<\frac{\epsilon}{2(K-1)}$ for all but finitely many $\alpha\in J$. Consequently, $|\Lambda(x^*(\alpha)) - (x^*)| < \epsilon$ for all but finitely many $\alpha\in J$. Hence $S_{(x_i)}$ is weakly compact.
\end{proof}
\begin{corollary}
   Let $X$ be a reflexive space. Let $(x_i) \in \ell^{1}(X_{}) $ be such that $ \| (x_i) \| = 1 $. Then $ S_{(x_i)}$ is weakly compact in $ \ell^\infty(X^*) $ if and only if  $\Lambda=\{i\in\mathbb{N:}\|x_{i}\|=0\}$ is finite and $ \textup{diam}\left(S_{\frac{x_i}{\|x_i\|}}\right) \to 0 $.
\end{corollary}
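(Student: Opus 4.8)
The plan is to reduce the statement to Theorem~\ref{3t12} by peeling off the coordinates on which $(x_i)$ vanishes and then invoking the reflexivity of $X$. The one payoff of reflexivity to record at the outset is this: since $X$ is reflexive, so is $X^*$, and hence the weak and weak$^*$ topologies on $X^*$ coincide (both being $\sigma(X^*,X)$, as $X^{**}=J_X(X)$). In particular every weak$^*$ compact subset of $X^*$ is weakly compact, so each $S_{\frac{x_i}{\|x_i\|}}$, being weak$^*$ compact, is automatically weakly compact. Thus the clause ``$S_{\frac{x_i}{\|x_i\|}}$ is weakly compact for each $i$'' appearing in Theorem~\ref{3t12} is free of charge, and only the diameter condition will carry information.

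Next I would isolate the vanishing coordinates through a direct-sum splitting. Writing $\Lambda=\{i:\|x_i\|=0\}$, decompose $\ell^1(X)= \left({\bigoplus_{\ell^1}}_{i\in \Lambda^c}X\right)\bigoplus_{\ell^1}\left({\bigoplus_{\ell^1}}_{i\in \Lambda}X\right)=:Y\bigoplus_{\ell^1}Z$; since $\|(x_i)\|=1>0$ we have $\Lambda^c\neq\emptyset$, and $(x_i)$ lies in $S(Y)$ with all of its $Y$-coordinates nonzero. By the proof of Lemma~\ref{2l1}, $S_{(x_i)}=S^{Y^*}_{(x_i)}\times Z_1^*$, and Proposition~\ref{2P4} then gives that $S_{(x_i)}$ is weakly compact if and only if $S^{Y^*}_{(x_i)}$ is weakly compact in $Y^*$ and $Z$ is reflexive.

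The last step is to identify these two conditions with the ones in the statement. A finite $\ell^1$-sum of reflexive spaces is reflexive, whereas an infinite one is never reflexive: for $X\neq\{0\}$ it contains an isometric copy of $\ell^1$ (send the $i$-th unit vector to a fixed unit vector of the $i$-th summand), and $\ell^1$ is not reflexive while closed subspaces of reflexive spaces are. Hence $Z$ is reflexive exactly when $\Lambda$ is finite. Assuming $\Lambda$ finite, $\Lambda^c$ is infinite, so after re-indexing $Y\cong\ell^1(X)$ and Theorem~\ref{3t12} applies to $(x_i)\in S(Y)$; combined with the first step, $S^{Y^*}_{(x_i)}$ is weakly compact if and only if $\operatorname{diam}\left(S_{\frac{x_i}{\|x_i\|}}\right)\to0$ over $i\in\Lambda^c$. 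Assembling the equivalences yields the corollary, the degenerate case $\Lambda=\emptyset$ (so $Z=\{0\}$, trivially reflexive) reducing directly to Theorem~\ref{3t12}.

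I expect the main obstacle to be the bookkeeping around the vanishing coordinates: recognizing $S_{(x_i)}$ as the product of $S^{Y^*}_{(x_i)}$ with the full dual ball $Z_1^*$ contributed by $\Lambda$, and then pinning the reflexivity of $Z$ precisely to the finiteness of $\Lambda$ via Proposition~\ref{2P4}. The remaining ingredients, namely the collapse of weak and weak$^*$ on $X^*$ and the re-indexing used to quote Theorem~\ref{3t12}, are routine once this splitting is in place.
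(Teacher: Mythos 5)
Your proof is correct and takes essentially the same approach as the paper: the paper's own proof is precisely the splitting $\ell^1(X)=\ell^1(\Lambda^c,X)\bigoplus_{\ell^1}\ell^1(\Lambda,X)$ followed by an appeal to Proposition~\ref{2P4} and Theorem~\ref{3t12}. Your write-up simply makes explicit the details the paper leaves to the reader, namely that reflexivity of $X$ makes the weak-compactness clause of Theorem~\ref{3t12} automatic (weak and weak$^*$ topologies coincide on $X^*$), and that $\ell^1(\Lambda,X)$ is reflexive precisely when $\Lambda$ is finite.
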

\begin{proof}
    Let $\Lambda=\{i\in\mathbb{N:}\|x_{i}\|=0\}$. Then we can write $\ell^1(X)=\ell^1(\Lambda^c,X)\bigoplus_{\ell^1}\ell^1(\Lambda,X)$. The result now follows from Proposition~\ref{2P4} and Theorem~\ref{3t12}.
\end{proof}
 An argument similar to the one given during the proof of Theorem~\ref{3t12} leads us to the following observation for norm compactness.
\begin{proposition}\label{3c4}
Let $ (x_i) \in {{\ell^{1}}}(X_{})$ with $ \| (x_i) \| = 1 $, and $ \|x_{i}\|\neq 0$ for all $i\in{\mathbb{N}}$. Then $S_{(x_i)}$ is norm compact in $ {{\ell^{\infty}}}(X_{}^*) $ if and only if $S_{\frac{x_i}{\|x_i\|}} $ is norm compact in $X_{}^*$ for each $i\in{\mathbb{N}}$ and $ \textup{diam}\left(S_{\frac{x_i}{\|x_i\|}}\right) \to 0 $.
\end{proposition}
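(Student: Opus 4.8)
The plan is to mirror the structure of the proof of Theorem~\ref{3t12}, replacing the weak-topology arguments by elementary metric ones, since for norm compactness the cleaner route is through total boundedness. As in that proof, Lemma~\ref{nadd} gives the identification $S_{(x_i)}=\prod_{i=1}^{\infty}S_{\frac{x_i}{\|x_i\|}}$, viewed as a subset of $\ell^{\infty}(X^*)$. Since each factor $S_{\frac{x_i}{\|x_i\|}}$ is weak$^*$ compact, hence norm closed, the product is norm closed in the Banach space $\ell^{\infty}(X^*)$ and therefore complete; consequently $S_{(x_i)}$ is norm compact if and only if it is totally bounded. I would record this reduction first, as it is the organising principle for both implications.

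For the forward implication, assume $S_{(x_i)}$ is norm compact. The coordinate projection $\Pi_j\colon\ell^{\infty}(X^*)\to X^*$ is $1$-Lipschitz, so its restriction to $S_{(x_i)}$ is norm-to-norm continuous and maps $S_{(x_i)}$ onto $S_{\frac{x_j}{\|x_j\|}}$; a continuous image of a norm compact set is norm compact, giving norm compactness of each factor. To see that $\textup{diam}\left(S_{\frac{x_i}{\|x_i\|}}\right)\to 0$, I argue by contradiction exactly as in Theorem~\ref{3t12}: if the diameters do not tend to $0$, after re-enumeration I may assume $\textup{diam}\left(S_{\frac{x_i}{\|x_i\|}}\right)\ge\epsilon_0$ for all $i$, pick $x_i^*,y_i^*\in S_{\frac{x_i}{\|x_i\|}}$ with $\|x_i^*-y_i^*\|\ge\frac{\epsilon_0}{2}$, and form the very same sequence $(x^*(n))$ used there. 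For $n\ne m$ these sequences disagree on some coordinate by at least $\frac{\epsilon_0}{2}$, so $\|(x^*(n))-(x^*(m))\|_{\infty}\ge\frac{\epsilon_0}{2}$; such a sequence admits no Cauchy, hence no norm-convergent, subsequence, contradicting norm compactness.

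For the converse, assume each $S_{\frac{x_i}{\|x_i\|}}$ is norm compact and $\textup{diam}\left(S_{\frac{x_i}{\|x_i\|}}\right)\to 0$; I would verify total boundedness directly. Fix a reference point $(z_i^*)\in S_{(x_i)}$ and $\epsilon>0$. Using the hypothesis on diameters, choose $K\in\mathbb{N}$ with $\textup{diam}\left(S_{\frac{x_i}{\|x_i\|}}\right)<\epsilon$ for all $i>K$, so that $\|x_i^*-z_i^*\|<\epsilon$ for every state $x_i^*$ when $i>K$. For each $i\le K$ the set $S_{\frac{x_i}{\|x_i\|}}$ is compact, hence totally bounded, so it admits a finite $\epsilon$-net $N_i$. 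The finite set of sequences whose first $K$ coordinates range over the $N_i$ and whose remaining coordinates equal $z_i^*$ is then an $\epsilon$-net for $S_{(x_i)}$: given $(x_i^*)\in S_{(x_i)}$, approximate the first $K$ coordinates from the $N_i$ and control the tail by the diameter bound. Thus $S_{(x_i)}$ is totally bounded and complete, hence norm compact.

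The main obstacle, and the only place the hypothesis $\textup{diam}\to 0$ is genuinely needed in the converse, is the control of the tail coordinates: without it the net would have to account for infinitely many coordinates and could not be kept finite. This is precisely the role that $\textup{diam}\to 0$ plays in Theorem~\ref{3t12} as well, where it guarantees that coordinate differences lie in $c_0(X^*)$; here it instead guarantees that the tail contributes less than $\epsilon$ to the supremum norm. Everything else is a routine transcription of the weak-compactness argument into the metric setting.
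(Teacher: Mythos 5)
Your proof is correct, and it diverges from the paper's in a small but genuine way in the converse direction. The forward implication is exactly what the paper intends (it merely cites the argument of Theorem~\ref{3t12}): coordinate projections carry norm compactness of $S_{(x_i)}$ to each factor, and the sequence $(x^*(n))$ built from a non-vanishing diameter yields a contradiction; your observation that in the norm setting one only needs that this sequence has no Cauchy subsequence is a legitimate simplification, since the paper's appeal to Mazur's theorem is only required in the weak setting. For the converse, the paper argues via sequential compactness: given a sequence in $S_{(x_i)}$, it assumes (implicitly, by a diagonal extraction) a subsequence converging in every coordinate, and then uses $\textup{diam}\left(S_{\frac{x_i}{\|x_i\|}}\right)\to 0$ to upgrade coordinatewise convergence to convergence in $\|\cdot\|_{\infty}$, the limit visibly lying in $\prod_{i}S_{\frac{x_i}{\|x_i\|}}=S_{(x_i)}$. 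You instead prove total boundedness: finite $\epsilon$-nets for the first $K$ coordinates, a single reference tail beyond $K$ controlled by the diameter hypothesis, and then compactness follows because the product is norm closed, hence complete, in $\ell^{\infty}(X^*)$. The trade-off is that your route needs the (easy but essential) closedness/completeness observation, which the paper can skip because it exhibits the limit point directly inside the set; in exchange, you avoid the simultaneous subsequence extraction that the paper's ``we may assume'' glosses over. Both proofs rest on the same head/tail split --- compactness for finitely many coordinates, $\textup{diam}\to 0$ for the tail --- so the difference is one of packaging rather than of key ideas, but your packaging is self-contained and slightly more careful.
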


\begin{proof}
 The only if part follows from a similar argument to the one  used in Theorem~\ref{3t12}.

For the converse, let us assume that $S_{\frac{x_i}{\|x_i\|}} $ is norm compact in $X_{}^*$ for each $i\in{\mathbb{N}}$ and  $\textup{diam}\left(S_{\frac{x_i}{\|x_i\|}}\right) \to 0$. To see that  $S_{(x_i)}$ is norm compact, let $\{(x^*(n))\}_{n\in\mathbb{N}} \subseteq S_{(x_i)}$ be a sequence. For each $k\in \mathbb{N}$, since $S_{\frac{x_k}{\|x_k\|}}$ is norm compact, we may assume that $\{x^*_{k}(n)\}_{n\in \mathbb{N}}$ is convergent to $x_{k}^*$ in norm for some $x_{k}^*\in S_{\frac{x_{k}}{\|x_{k}\|}}$. Now we claim that the sequence $\{(x^*(n))\}_{n\in\mathbb{N}}$ is converging to $\{x_{k}^*\}_{k\in\mathbb{N}}$ in norm. Fix $\epsilon>0$. Since $\textup{diam}\left(S_{\frac{x_i}{\|x_i\|}}\right) \to 0$, there exists  $m_{1}\in \mathbb{N}$ such that $\|x_{k}^*(n)-x_{k}^*\|<\frac{\epsilon}{2}$ for each $n\in \mathbb{N}$ and $k\geq m_{1}$. Since $x_{k}^*(n)\to x_{k}^*$ as $n\to \infty$ in norm for each $1\leq k\leq m_{1}$, there exists $m_{2}\in\mathbb{N}$ such that $\|x_{k}^*(n)-x_{k}^*\|<\frac{\epsilon}{2}$ for each $n\geq m_{2}$ and $1\leq k\leq m_{1}$. For each $n\geq m_{2}$, we have $\|x_{k}^*(n)-x_{k}^*\|\leq \frac{\epsilon}{2}$ for all $k\in \mathbb{N}$. Consequently, $\|\{x^*(n)\}-\{x_{k}^*\}\|_{\infty}\leq \frac{\epsilon}{2}<\epsilon$ for all $n\geq m_{2}$. Thus $S_{(x_{i})}$ is norm compact. This completes the proof.
\end{proof}

The following is immediate from Proposition~\ref{2P5} and Proposition~\ref{3c4}.
\begin{corollary}
    Let $X$ be a finite-dimensional space. Let $(x_i) \in \ell^{1}(X) $ be such that $ \| (x_i) \| = 1 $. Then $ S_{(x_i)}$ is norm compact in $ \ell^\infty(X^*) $ if and only if  $\Lambda=\{i\in\mathbb{N}:\|x_{i}\|=0\}$ is finite and $ \textup{diam}\left(S_{\frac{x_i}{\|x_i\|}}\right) \to 0 $.
\end{corollary}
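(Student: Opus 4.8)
The plan is to reduce the statement to Proposition~\ref{2P5} and Proposition~\ref{3c4} by splitting off the vanishing coordinates, exactly in the spirit of the reflexive corollary following Theorem~\ref{3t12}. Since $x_i=0$ for every $i\in\Lambda$, the element $(x_i)$ in fact lies in $\ell^1(\Lambda^c,X)$ and keeps norm one there. So first I would record the $\ell^1$-decomposition
\[
\ell^1(X)=\ell^1(\Lambda^c,X)\bigoplus_{\ell^1}\ell^1(\Lambda,X),
\]
viewing $(x_i)$ as an element of $S(\ell^1(\Lambda^c,X))$ and treating $\ell^1(\Lambda,X)$ as the complementary $L$-summand.

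Next I would apply Proposition~\ref{2P5} with $Y=\ell^1(\Lambda^c,X)$ and $Z=\ell^1(\Lambda,X)$: the state space $S_{(x_i)}$ is norm compact in $\ell^\infty(X^*)$ if and only if $S^{Y^*}_{(x_i)}$ is norm compact in $Y^*$ \emph{and} $Z=\ell^1(\Lambda,X)$ is finite-dimensional. Because $X$ is finite-dimensional (and nontrivial, otherwise the unit sphere is empty and the claim is vacuous), $\ell^1(\Lambda,X)$ is finite-dimensional precisely when $\Lambda$ is finite. This converts the finite-dimensionality of $Z$ into the condition that $\Lambda$ be finite, disposing of the $\Lambda$-part and isolating the remaining question to $\ell^1(\Lambda^c,X)$.

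When $\Lambda$ is finite, I would re-enumerate $\Lambda^c$ as $\mathbb{N}$, so that $\ell^1(\Lambda^c,X)$ is isometrically $\ell^1(X)$ with every coordinate nonzero; Proposition~\ref{3c4} then gives that $S^{Y^*}_{(x_i)}$ is norm compact if and only if each $S_{x_i/\|x_i\|}$ is norm compact in $X^*$ and $\mathrm{diam}\left(S_{x_i/\|x_i\|}\right)\to 0$. Here finite-dimensionality of $X$ enters a second time: $X^*$ is finite-dimensional, so every $S_{x_i/\|x_i\|}$, being weak$^*$ compact, is automatically norm compact. Thus the per-coordinate norm-compactness clauses are vacuous and only the diameter condition remains, and since deleting the finitely many indices in $\Lambda$ does not affect the tail limit, this diameter condition is unchanged by the re-enumeration. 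Combining the two reductions yields the corollary. No genuine obstacle arises; the only points needing care are placing the two uses of finite-dimensionality correctly (collapsing the $\Lambda$-summand, and trivializing the individual norm-compactness conditions) and verifying that the tail condition $\mathrm{diam}\left(S_{x_i/\|x_i\|}\right)\to 0$ is insensitive to the finite re-indexing.
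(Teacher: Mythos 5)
Your proposal is correct and follows essentially the same route as the paper: the paper obtains this corollary directly from Propositions~\ref{2P5} and~\ref{3c4} via the decomposition $\ell^1(X)=\ell^1(\Lambda^c,X)\bigoplus_{\ell^1}\ell^1(\Lambda,X)$, exactly as in the reflexive analogue proved after Theorem~\ref{3t12}. The extra details you spell out (nontriviality of $X$, automatic norm compactness of state spaces in the finite-dimensional dual $X^*$, and insensitivity of the tail diameter condition to finite re-indexing) are precisely what the paper leaves implicit in calling the result immediate.
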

\begin{problem}
In Theorem~\ref{3t1}, we have characterized smooth point in $L^{1}([0,1],X)$. In Theorem~\ref{Th316}, we characterized the norm compactness of $S_f$ for $f\in S(L^{1}([0,1],X))$. But a result similar to Theorem~\ref{Th316} for weak compactness is not known.
\end{problem}
\begin{problem}
    The conclusions of Theorem~\ref{3t1} and Theorem~\ref{Th316} are not known when $X^*$ is not separable.
\end{problem}
\vspace{2mm}
\noindent\small{\bf Conflict of interest:} The authors declare that there is no conflict of interest.

\subsection*{Acknowledgements}
The authors thank the referee for their thorough review. Their insightful comments have significantly improved the clarity and precision of several arguments in the paper. The authors also thank Prof. T. S. S. R. K. Rao and Dr. Priyanka Grover for many helpful discussions and valuable suggestions. The work was done when the first author was a Research Associate in the project Classification of Banach spaces using differentiability, funded by the Anusandhan National Research Foundation (ANRF), CRG/2023/000595, India (PI: T. S. S. R. K. Rao, Co PI: Priyanka Grover).

%%%%%%%%%%% To ease editing, use normal size for the references:

\normalsize

\vspace{4mm}
\textbf{Current address of Dr. Daptari:} \tiny{KATSUSHIKA DIVISION, INSTITUTE OF ARTS AND SCIENCES, TOKYO UNIVERSITY OF SCIENCE, TOKYO 125-8585, JAPAN}  

\end{document}